\newtheorem{lemma}{Lemma}
\newtheorem{proposition}[lemma]{Proposition}
\newtheorem{theorem}[lemma]{Theorem}
\newtheorem{remark}[lemma]{Remark}
\newtheorem{example}[lemma]{Example}
\numberwithin{equation}{section}
\numberwithin{lemma}{section}
\newcommand{\N}{\mathbb{N}}    
\newcommand{\NN}{\mathbb{N}_0} 
\newcommand{\R}{\mathbb{R}}    
\newcommand{\bo}{\mathcal{O}}
\newcommand{\ka}{\textsf{k}}
\newcommand{\ba}{\textsf{b}}
\newcommand{\xa}{\textsf{x}}
\newcommand{\ya}{\textsf{y}}
\newcommand{\p}{\partial}
\newcommand{\be}{ \begin{equation} }
	\newcommand{\ee}{ \end{equation} }
\newcommand{\ind}{\Lambda}
\newcommand{\bu}{{\bm u}}
\newcommand{\bH}{{\bm H}}
\newcommand{\nab}{\nabla}
\newcommand{\bbR}{\mathbb{R}}
\newcommand{\bpsi}{{\bm \psi}}
\begin{document}
	
	\title[]{A High-Order, Pressure-Robust, and Decoupled Finite Difference Method for the Stokes Problem}
	
\author{Qiwei Feng}
\address{Department of Mathematics, University of Pittsburgh, Pittsburgh, PA 15260 USA.}
\email{qif21@pitt.edu, qfeng@ualberta.ca}

\author{Bin Han}
\address{Department of Mathematical and Statistical Sciences, University of Alberta, Edmonton, Alberta, Canada T6G 2G1.}
\email{bhan@ualberta.ca}

\author{Michael Neilan}
\address{Department of Mathematics, University of Pittsburgh, Pittsburgh, PA 15260 USA.}
\email{neilan@pitt.edu}

\thanks{Research supported in part by  the Mathematics Research Center, Department of Mathematics, University of Pittsburgh, Pittsburgh, PA, USA (Qiwei Feng),  Natural Sciences and Engineering Research Council (NSERC) of Canada under grants RGPIN-2024-04991 (Bin Han), and  the National Science Foundation grant DMS-2309425 (Michael Neilan).
}

	\makeatletter \@addtoreset{equation}{section} \makeatother

\begin{abstract}
In this paper, we consider the Stokes problem with Dirichlet boundary 
conditions and the constant kinematic viscosity $\nu$ in an axis-aligned domain $\Omega$.
We decouple
the velocity $\bu$ and pressure $p$ by deriving a novel 
biharmonic equation in $\Omega$ and  third-order boundary conditions on $\partial\Omega$.
In contrast to the fourth-order streamfunction approach, our
formulation does not require  $\Omega$ to be simply connected.
For smooth velocity fields $\bu$ in two dimensions, we explicitly construct a finite difference method (FDM) with sixth-order consistency to approximate $\bu$  at all relevant grid points: interior points, boundary side points, and boundary corner points.
The resulting scheme yields two linear systems 
$A_1u^{(1)}_h=b_1$ and $A_2u^{(2)}_h=b_2$, where $A_1,A_2$ are constant matrices, and $b_1,b_2$ are independent of the pressure $p$ and the kinematic viscosity $\nu$.  Thus, the proposed method is pressure- and viscosity-robust.
To accommodate  velocity fields with less regularity, we modify the FDM by removing singular terms in the right-hand side vectors.
Once the discrete velocity
 is computed, we apply a sixth-order finite difference operator to approximate the pressure gradient locally, without solving any additional linear systems.
In our numerical experiments, we test both smooth and non-smooth solutions
$(\bu,p)$ in a square domain, a triply connected domain, and an $L$-shaped domain in two dimensions. The results confirm sixth-order convergence of the velocity and pressure gradient in the
$\ell_\infty$-norm for smooth solutions. For non-smooth velocity fields, our method achieves the expected lower-order convergence. Moreover, the observed velocity error $\|\bu_h-\bu\|_{\infty}$
is independent of the pressure $p$  and viscosity $\nu$.

\end{abstract}	

\keywords{Stokes problems,  pressure- and viscosity-robust FDMs, sixth-order convergence rates, the decouple property, a novel biharmonic equation, explicit formulas of FDMs}

	\subjclass[2010]{65N06, 41A58, 	 31A30, 	31B30}
\maketitle

\pagenumbering{arabic}

		\section{Introduction}

The Stokes and Navier-Stokes equations have numerous applications across science and engineering, particularly in fluid dynamics, meteorology, oceanography, and biomedical engineering. Precisely, they are essential for modeling airflow over aircraft, predicting weather patterns, analyzing ocean currents, and simulating blood flow. A widely utilized finite difference method (FDM) for solving these equations is the Marker-and-Cell (MAC) scheme using staggered grids, which was originally introduced in 1965 \cite{Harlow1965}. A comprehensive review of the MAC scheme can be found in \cite{McKee2008}, while an overview of finite element methods (FEMs) for the Stokes and Navier-Stokes equations is provided in \cite{JLMNR2017}. As the  proposed method in this paper is based on FDMs, we restrict our (partial) literature review to FDMs, focused on MAC-based schemes.

Throughout this paper, unless explicitly stated otherwise, the term Stokes problem/equation refers to the incompressible steady-state scaled Stokes problem/equation.
For the Stokes problem with homogeneous or non-homogeneous Dirichlet boundary conditions on a rectangular domain,
\cite{Chen2022} proposed the second-order hyper-reduced MAC scheme for the Stokes equation with the smallest viscosity  $\nu=10^{-3}$ in its examples;
\cite{Han1998} provided the
first-order error
estimate of the new mixed FEM and the MAC method; \cite{Kanschat2008,Minev2008} applied modified discontinuous Galerkin methods to derive MAC schemes; \cite{Li2015}
proved the second-order convergence rate of the MAC scheme  with non-uniform meshes and $\nu=1$; \cite{Lin2016} used a staggered finite volume element method to derive the MAC scheme with $\nu=1$ and then proved the  second-order convergence rate  over non-uniform rectangular meshes; \cite{Liu2019} described
a new  FDM with  the order of $3/2$ and $\nu=1$;  \cite{Rui2017} derived the second-order MAC scheme  using non-uniform grids  and chose $\nu=1$ in all numerical examples;
\cite{Strikwerda1984}  constructed
an iterative method of the second-order FDM  with $\nu=1$.
For the	 2D Stokes problem with general boundary conditions on a rectangle, 	 \cite{Ito2008} proposed
a fourth-order compact MAC  scheme on the staggered grid with $\nu=1$;
 and
\cite{Wang2022} deduced a
second-order augmented approach algorithm  with the smallest $\nu=10^{-6}$.
For	 2D and 3D Stokes problems with homogeneous or non-homogeneous  Dirichlet  boundary conditions on  simply connected irregular domains,  	
\cite{Chen2015} proved the  convergence rate of order $1.5$ using the triangular MAC scheme  with $\nu=1$ in all its examples;
\cite{Song2020}  (2D and 3D) presented
second-order generalized FDMs by adding an extra boundary condition from the Stokes equation with $\nu=1$; and
\cite{Strikwerda19842} (2D) developed a
second-order FDM  with $\nu=1$.

%
%
%
%
%
%
%
%
\begin{figure}[htbp]
	\centering
	\hspace{1.1cm}
	 \begin{subfigure}[b]{0.3\textwidth}
		\begin{tikzpicture}[scale = 2]
			\draw	(-1, -1) -- (-1, 1) -- (1, 1) -- (1, -1) --(-1,-1);	 
			\node (A) [scale=1] at (0,0) {$\Omega$};
		\end{tikzpicture}
	\end{subfigure}		
	 \begin{subfigure}[b]{0.3\textwidth}
		\begin{tikzpicture}[scale = 2]
			\draw	(-1, -1) -- (-1, 1) -- (1, 1) -- (1, -1) --(-1,-1);	 
			
			\draw	(-1/2, -1/2) -- (-1/2, -1/4) -- (1/4, -1/4) -- (1/4, -1/2) -- (-1/2,-1/2);	

            \draw	(1/2, -3/4) -- (1/2, 1/2) -- (3/4, 1/2) -- (3/4, -3/4) -- (1/2,-3/4);		

            \draw	(-3/4, 0) -- (-3/4, 3/4) -- (0, 3/4) -- (0, 0) -- (-3/4,0);
            	
			\node (A) [scale=1] at (0.25,0) {$\Omega$};
		\end{tikzpicture}
	\end{subfigure}
	 \begin{subfigure}[b]{0.3\textwidth}
	\begin{tikzpicture}[scale = 2]
		\draw	(-1, -1) -- (-1, 1) -- (1, 1) -- (1, 0) -- (0, 0)-- (0, -1) --(-1,-1);	
		\node (A) [scale=1] at (-0.2,0) {$\Omega$};
	\end{tikzpicture}
\end{subfigure}	
	\caption{Three examples for the domain $\Omega$ of the Stokes problem \eqref{Model:Original} in 2D.  }
	\label{fig:domain}
\end{figure}
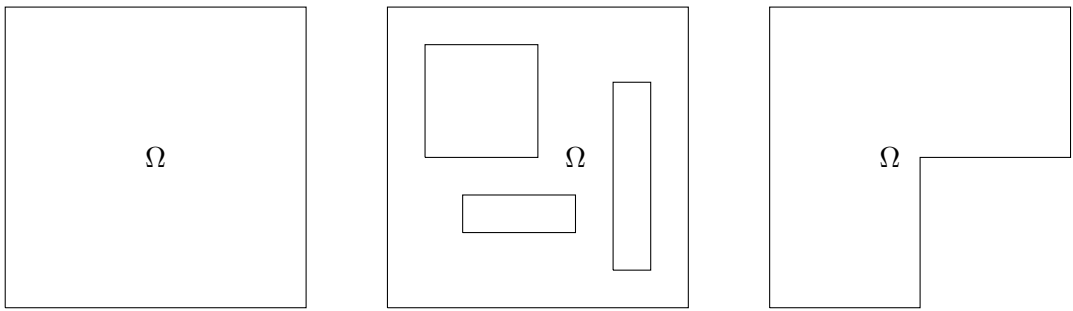

In this paper,  we consider the Stokes problem in an axis-aligned domain $\Omega \subset \bbR^d$ 
whose sides are parallel to the coordinate axes.
The domain $\Omega$ is not necessarily simply connected; see 
 Figure~\ref{fig:domain} for an illustration in two dimensions.
 The goal is to find $(\bu,p)\in  \bH^1(\Omega)\times L^2_0(\Omega)$ such that
\begin{align}\label{Model:Original}
 \hspace{3cm}	-\nu \Delta \bu + \nab p & = {\bm f} &&  \text{in }\Omega,\notag\\
 \hspace{3cm}	-\nab\cdot \bu & = \phi &&  \text{in }\Omega,\\
 \hspace{3cm}	 \bu & = {\bm g} &&   \text{on }\partial \Omega,\notag
\end{align}
where ${\bm f}:\Omega\to \bbR^d$ denotes a given external force (e.g.,
centrifugal force, buoyancy, gravity),
${\bm g}:\partial \Omega\to \bbR^d$ is the boundary velocity,
and $\phi:\Omega\to \bbR$ is a given source term.
The space $L_0^2(\Omega)$ consists 
of square-integrable functions with vanishing mean,
and $\nu>0$ is the kinematic viscosity (assumed to be constant for simplicity). The fluid velocity is denoted by $\bu$, and the pressure by $p$. When $\phi = 0$, the divergence constraint $\nabla \cdot \bu = 0$ enforces the incompressibility (i.e., mass conservation) of the fluid.

We propose a reformulation of the Stokes problem \eqref{Model:Original} that decouples the pressure from the velocity field in $\Omega \subset \mathbb{R}^d$, and further decouples these quantities on the boundary $\partial \Omega$ when $d=2$. The resulting equation for the velocity takes the form of a fourth-order PDE of biharmonic type, and we prove that this reformulation is independent of both the pressure field $p$ and the viscosity $\nu$. Using the Dirichlet boundary conditions on $\partial\Omega$ (with $\Omega\subset \R^2$) along with the momentum and continuity equations,
we decouple $\bu$ and $p$ on the boundary  by deriving third-order boundary conditions on $\bu$. Unlike the classical streamfunction formulation (see, e.g., \cite{Glowinski1979}), our approach remains valid even for non-simply connected domains.

 Based on  the two-dimensional reformulation, we construct a sixth-order FDM to compute the velocity components 
 $u_h^{(1)}$ and $u_h^{(2)}$ separately, under the assumption that $\bu$  and $p$ are smooth. 
 Specifically, we assume  $\bu$ has uniformly continuous partial derivatives of total order nine  in $\Omega$, and 
 that $p$ has uniformly continuous partial derivatives of total order seven. Since the reformulated PDEs do not involve $p$ and $\nu$, the proposed sixth-order FDM is pressure- and viscosity-robust, that is, the error between $\bu_h$  and $\bu$ is independent of the pressure $p$ and viscosity $\nu$. For $\bu$ with less regularity, the proposed method still appears to 
 converge, albeit with reduced rates as expected. 
 
In contrast to the aforementioned FDMs, our proposed scheme
achieves high-order accuracy and pressure-robustness.
To the best of our knowledge,  this is the first FDM with both of these properties.
This is made possible not by directly
discretizing the Stokes problem, but rather by developing a compact finite 
 difference stencil for the reformulated biharmonic equation,
along with discretizations of non-standard third-order boundary conditions. 
A potential drawback of the approach is that, 
although it avoids solving a saddle-point system, the resulting 
discretization of a fourth-order PDE
leads to an algebraic system
with a condition number that scales like  $\bo(h^{-4})$.


The remainder of the paper is organized as follows. 
In \cref{Reformulation}, we reformulate the Stokes problem  \eqref{Model:Original} that decouples
the velocity $\bu$ and pressure $p$ in \cref{biharmonic:propEdit}. 
In two dimensions, we decouple the velocity and pressure on the boundary 
 in \cref{three:order:pde:propEdit} and \cref{thm:all:pde:2D}. 
In \cref{FDMs:all:points}, in a 2D domain whose boundary is aligned with the coordinate axes, we  
propose a FDM to approximate the velocity $\bu=(u^{(1)},u^{(2)})$ and  gradient $(p_x,p_y)$ of the pressure $p$. Precisely, for  smooth $\bu$, we derive a sixth-order FD operator 
at interior grid points in \cref{FDMs:interior} (\cref{theorem:interior}), at boundary side points in \cref{FDMs:sides} (\cref{theorem:side:1:generalized}), at boundary corner points  in \cref{FDMs:corners} (\cref{theorem:corner:1}), and at special grid points in \cref{FDMs:special:point}. In \cref{FDMs:singular}, we modify the right-hand side of our proposed sixth-order FDM to approximate   $\bu$ with less smoothness.
In \cref{Matrix:FDMs}, we summarize the proposed FDM as a linear system,
where the constant stiffness matrix is of block diagonal and the right-hand side vector is independent
of $p$ and $\nu$. In \cref{sec:pxpy}, we use a sixth-order FD operator to approximate $(p_x,p_y)$ by the computed numerical velocity $\bu_h$.

In \cref{numerical:test}, we test examples for the model problem \eqref{Model:Original} with any $\nu>0$ in the  $l_\infty$-norm in the following three cases: (i) smooth $\bu$ and $p$ in $(-1,1)^2$ in \cref{example1};
(ii) smooth $\bu$ and $p$ with large amplitude, and singular $p$ in a triply connected domain in \cref{example2};
(iii) singular $\bu$ and smooth $p$ in an $L$-shaped domain in \cref{example3}.
The numerical experiments indicate that the FDM is stable
and converges with the expected $6$th-order accuracy in the case of smooth solutions.
In the last test, we also observe convergence with reduced rates. The numerical experiments
also illustrate the pressure-robustness of the method.
Finally, in \cref{sec:Conclu}, we summarize the main contributions of this paper
and discuss future directions.

\section{Reformulation of the Stokes Problem}\label{Reformulation}

	\subsection{The $d$-dimensional case}\label{d:dimensional}
In this section, we formally reformulate problem \eqref{Model:Original}
as a decoupled set of the fourth-order problem.
This is summarized in the following proposition.

\begin{proposition}\label{biharmonic:propEdit}
Let  $\Omega\in \R^d$ be a bounded domain ($\Omega$ is not necessarily a polytope). Suppose that 
$\bu$, ${\bm f}$, $p$,  $\phi$ satisfy \eqref{Model:Original} and are smooth. Then there holds
\begin{align}\label{eqn:BiharmonicEdit}
-\Delta^2 \bu = {\bm \psi}
\quad \mbox{with}\quad
{\bm \psi}:=	\Delta (\nab \phi) +\nu^{-1}[\Delta {\bm f}-\nab (\nab\cdot {\bm f})].
\end{align}
Furthermore, the quantities ${\bm \psi}$ and $\nu^{-1} \Delta {\bm f}-\nu^{-1}\nab (\nab \cdot {\bm f})$ are independent of both the pressure $p$ and the constant kinematic viscosity $\nu$ due to
\begin{equation}\label{f:nop}
\nu^{-1}[\Delta {\bm f}-\nab (\nab \cdot {\bm f})]=-\Delta^2 \bu+\nab (\nab \cdot \Delta \bu).
\end{equation}
\end{proposition}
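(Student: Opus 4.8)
The plan is to eliminate the pressure $p$ from the momentum equation by applying second-order differential operators and exploiting the continuity equation, rather than introducing any potential or streamfunction. The two algebraic facts I will use are that, for smooth fields, the Laplacian commutes with both the gradient and the divergence, i.e. $\Delta(\nab p)=\nab(\Delta p)$ and $\Delta(\nab\cdot\bu)=\nab\cdot(\Delta\bu)$; these follow immediately from equality of mixed partials under the assumed smoothness.

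First I would derive a scalar equation for $p$. Taking the divergence of the momentum equation $-\nu\Delta\bu+\nab p={\bm f}$ and using $\nab\cdot\bu=-\phi$ from the continuity equation gives $\Delta p=\nab\cdot{\bm f}-\nu\Delta\phi$. Next I would apply the Laplacian to the momentum equation, obtaining $-\nu\Delta^2\bu+\nab(\Delta p)=\Delta{\bm f}$ after commuting $\Delta$ with $\nab$. Substituting the expression for $\Delta p$, and commuting $\nab$ with $\Delta$ once more to write $\nab(\Delta\phi)=\Delta(\nab\phi)$, then yields $-\Delta^2\bu=\Delta(\nab\phi)+\nu^{-1}[\Delta{\bm f}-\nab(\nab\cdot{\bm f})]={\bm \psi}$, which is exactly \eqref{eqn:BiharmonicEdit}. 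Dividing by $\nu$ at the right stage is what makes the viscosity enter only through the factor $\nu^{-1}$ multiplying the ${\bm f}$-terms.

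To prove the identity \eqref{f:nop}, rather than re-deriving it from scratch I would simply substitute ${\bm f}=-\nu\Delta\bu+\nab p$ directly into $\Delta{\bm f}-\nab(\nab\cdot{\bm f})$. Expanding gives $\Delta{\bm f}=-\nu\Delta^2\bu+\nab(\Delta p)$ and $\nab(\nab\cdot{\bm f})=-\nu\nab(\nab\cdot\Delta\bu)+\nab(\Delta p)$, so the two $\nab(\Delta p)$ terms cancel and, after dividing by $\nu$, one is left with $-\Delta^2\bu+\nab(\nab\cdot\Delta\bu)$, as claimed. The cancellation of the pressure term is the crux: it shows that the combination $\Delta{\bm f}-\nab(\nab\cdot{\bm f})$ depends only on $\bu$, and hence that ${\bm \psi}$ is independent of $p$ and $\nu$.

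There is no genuine analytic obstacle in this proposition; the only care required is in the bookkeeping of the operator commutations and in verifying that the smoothness hypothesis justifies interchanging the differential operators, so that all mixed partial derivatives involved are continuous and equal. Keeping track of signs when substituting $\nab\cdot\bu=-\phi$ and when dividing through by $\nu$ is the main place where a computational slip could occur.
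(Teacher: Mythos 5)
Your proof is correct and follows essentially the same route as the paper's: both eliminate the pressure via the commutation identities $\Delta\nab=\nab\Delta$ and the pressure Poisson relation $\Delta p=\nab\cdot{\bm f}-\nu\Delta\phi$ (the paper writes this in gradient form $\nab\Delta p=\nab(\nab\cdot{\bm f})-\nu\Delta\nab\phi$), and both prove \eqref{f:nop} by substituting ${\bm f}=-\nu\Delta\bu+\nab p$ and cancelling the $\nab\Delta p$ terms. The sign bookkeeping with $\nab\cdot\bu=-\phi$ and the division by $\nu$ are handled correctly throughout.
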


\begin{proof}
From $-\nu \Delta \bu + \nab p = {\bm f}$ in \eqref{Model:Original},
we have $\nab p={\bm f}+\nu \Delta \bu $ and hence
\begin{align*}
\nab \Delta p =\nab ( \nab \cdot \nab p) = \nab \big(\nab\cdot {\bm f} +\nu \nab \cdot \Delta \bu\big) = \nab (\nab \cdot {\bm f}) -\nu \Delta \nab \phi,
\end{align*}
where we used $-\nab\cdot \bu = \phi$.
Using ${\bm f}=-\nu \Delta \bu + \nab p$ and the above identity, we deduce
\begin{align*}
\nu^{-1} \Delta {\bm f} = \nu^{-1} \Delta (-\nu \Delta \bu+\nab p) = -\Delta^2 \bu+\nu^{-1} \nab (\nab \cdot {\bm f}) -\Delta \nab \phi,
\end{align*}
where we used the simple fact $\Delta (\nab p)=\nab \Delta p$.
Rearranging the terms in this identity yields \eqref{eqn:BiharmonicEdit}.

On the other hand, we deduce from ${\bm f}=-\nu \Delta \bu + \nab p$ that
\[
\nu^{-1}\Delta {\bm f}=-\Delta^2 \bu+\nu^{-1} \Delta \nab p
\quad \mbox{and}\quad
\nu^{-1} \nab (\nab \cdot {\bm f})=-\nab (\nab \cdot \Delta \bu)+\nu^{-1} \nab (\nab\cdot \nab p).
\]
Because
$\Delta {\nab p}-\nab (\nab\cdot {\nab p})=\Delta {\nab p}- \nab \Delta  p=0$,
we conclude that \eqref{f:nop} holds.
\end{proof}

\subsection{The 2D case}\label{2D:case} In this section, we restrict our problem  \eqref{Model:Original}
to a polygonal domain $\Omega\subset \bbR^2$ ($\Omega$ is not necessarily an axis-aligned domain).
We formally reformulate problem \eqref{Model:Original} on $\partial \Omega$
 as third-order PDEs in the following  proposition.
\begin{proposition}\label{three:order:pde:propEdit}
Suppose that $\bu=(u^{(1)},u^{(2)})$, ${\bm f}=(f^{(1)},f^{(2)})$, ${\bm g}=(g^{(1)},g^{(2)})$,  $p$, $\phi$ satisfy \eqref{Model:Original}
and	are smooth in the closure of a polygonal domain $\Omega\subset  \R^2$ ($\Omega$ is not necessarily an axis-aligned domain). Let $\Gamma$  be a straight-line segment on the boundary $\partial\Omega$ which can be parametrized as
\be \label{parametrized}
x(t)={\textup\ka}_1t+{\textup\ba}_1, \quad y(t)={\textup\ka}_2t+{\textup\ba}_2, \qquad {\textup\ka}_1, {\textup\ka}_2, {\textup\ba}_1, {\textup\ba}_2\in \R, \qquad (x(t),y(t)) \in \Gamma \subset \Omega,
\ee
where $|{\textup\ka}_1|+|{\textup\ka}_2|\ne 0$.
Then there holds
\be
	 \begin{split}\label{bd:eqn:1:three:order:Edit} &{\textup\ka}_1^2({\textup\ka}_1-3{\textup\ka}_2)u^{(1)}_{xxx}+{\textup\ka}_1(2{\textup\ka}_1^2+3{\textup\ka}_1{\textup\ka}_2-3{\textup\ka}_2^2)u^{(1)}_{xxy}+	 {\textup\ka}_2^2(3{\textup\ka}_1-{\textup\ka}_2)u^{(1)}_{xyy}+({\textup\ka}_1^3+{\textup\ka}_2^3 )u^{(1)}_{yyy}\\
	&\quad  =3{\textup\ka}_1^2{\textup\ka}_2\phi_{xx}+{\textup\ka}_1(3{\textup\ka}_2^2-{\textup\ka}_1^2)\phi_{xy} +{\textup\ka}_2^3\phi_{yy} +\nu^{-1}{\textup\ka}_1^3( f^{(2)}_{x} - f^{(1)}_{y})+\rho_1+\rho_2 \quad \text{on} \quad \Gamma,
	\end{split}
	\ee
	and
	\be
	 \begin{split}\label{bd:eqn:2:three:order:Edit}
		 &({\textup\ka}_1^3+{\textup\ka}_2^3)u^{(2)}_{xxx}+{\textup\ka}_1^2(3{\textup\ka}_2-{\textup\ka}_1)u^{(2)}_{xxy}+	 {\textup\ka}_2(2{\textup\ka}_2^2+3{\textup\ka}_1{\textup\ka}_2-3{\textup\ka}_1^2)u^{(2)}_{xyy}+{\textup\ka}_2^2({\textup\ka}_2-3{\textup\ka}_1 )u^{(2)}_{yyy}\\
		&\quad  ={\textup\ka}_1^3\phi_{xx}+{\textup\ka}_2(3{\textup\ka}_1^2-{\textup\ka}_2^2)\phi_{xy} +3{\textup\ka}_1{\textup\ka}_2^2 \phi_{yy} +\nu^{-1}{\textup\ka}_2^3( f^{(1)}_{y} -f^{(2)}_{x})  +\rho_1+\rho_2 \quad \text{on} \quad \Gamma,
	\end{split}
	\ee
where
\[
\rho_r:=\sum_{i=0}^3{3 \choose i}  {\textup\ka}_1^{3-i}{\textup\ka}_2^{i}\frac{\p^3 g^{(r)}}{\p x^{3-i} \p y^{i}}, \qquad r=1,2.
\]
Furthermore, the right-hand sides of \eqref{bd:eqn:1:three:order:Edit} and \eqref{bd:eqn:2:three:order:Edit} are completely independent of both the pressure $p$ and the constant kinematic viscosity $\nu$ in \eqref{Model:Original} due to
\begin{equation}\label{indepen:p:v}
 \nu^{-1} [f^{(1)}_y- f^{(2)}_x]=\Delta [u^{(2)}_{x}-u^{(1)}_{y}].
\end{equation}
\end{proposition}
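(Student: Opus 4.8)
The plan is to derive both boundary identities \eqref{bd:eqn:1:three:order:Edit} and \eqref{bd:eqn:2:three:order:Edit} by combining three families of relations, all valid on $\Gamma$ by smoothness up to the boundary: the third-order tangential derivatives of the Dirichlet data, the continuity equation together with its tangential derivatives, and a single vorticity identity extracted from the momentum equation. I would begin with that vorticity identity, namely \eqref{indepen:p:v}, since it is the algebraic engine behind everything. Writing the two scalar components of $-\nu \Delta \bu + \nab p = {\bm f}$ separately, I differentiate the first in $y$ and the second in $x$ and subtract; the mixed pressure derivatives cancel because $p_{xy}=p_{yx}$, leaving $-\nu \Delta(u^{(2)}_x - u^{(1)}_y) = f^{(2)}_x - f^{(1)}_y$, which is exactly \eqref{indepen:p:v} after dividing by $-\nu$. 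This simultaneously proves the claimed $p$- and $\nu$-independence of the right-hand sides, since $\nu^{-1}(f^{(1)}_y - f^{(2)}_x)$ coincides with the purely kinematic expression $\Delta(u^{(2)}_x - u^{(1)}_y)$.

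Next I would produce the tangential relations. Since $\Gamma$ is a straight segment with tangent vector $(\ka_1,\ka_2)$ and $\bu = {\bm g}$ on $\partial\Omega$, the directional derivative along $\Gamma$ is $\frac{d}{dt} = \ka_1 \p_x + \ka_2 \p_y$, and applying it three times to $u^{(r)} = g^{(r)}$ gives $(\ka_1 \p_x + \ka_2 \p_y)^3 u^{(r)} = \rho_r$ on $\Gamma$ for $r=1,2$; the binomial expansion of this operator reproduces precisely the stated definition of $\rho_r$. Call these two relations $(A_1)$ and $(A_2)$. Each involves all four third-order derivatives of a single velocity component.

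The core of the argument is to convert $(A_2)$ into a relation in the third derivatives of $u^{(1)}$ alone, and symmetrically $(A_1)$ into one in $u^{(2)}$. Differentiating the continuity equation $u^{(1)}_x + u^{(2)}_y = -\phi$ twice lets me replace the mixed derivatives $u^{(2)}_{xxy}, u^{(2)}_{xyy}, u^{(2)}_{yyy}$ by $-u^{(1)}_{xxx}-\phi_{xx}$, $-u^{(1)}_{xxy}-\phi_{xy}$, $-u^{(1)}_{xyy}-\phi_{yy}$. Only the pure derivative $u^{(2)}_{xxx}$ resists this substitution, and it is exactly here that the vorticity identity enters: expanding $\Delta(u^{(2)}_x - u^{(1)}_y) = \nu^{-1}(f^{(1)}_y - f^{(2)}_x)$ and eliminating $u^{(2)}_{xyy}$ through continuity expresses $u^{(2)}_{xxx} = 2u^{(1)}_{xxy} + u^{(1)}_{yyy} + \phi_{xy} + \nu^{-1}(f^{(1)}_y - f^{(2)}_x)$. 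Substituting these four replacements into $(A_2)$ produces a relation $(A_2')$ whose left-hand side is a combination of $u^{(1)}_{xxx},\dots,u^{(1)}_{yyy}$, and forming $(A_1)+(A_2')$ yields \eqref{bd:eqn:1:three:order:Edit}, with the coefficients collapsing into the stated factored forms $\ka_1^2(\ka_1-3\ka_2)$, $\ka_1(2\ka_1^2+3\ka_1\ka_2-3\ka_2^2)$, and so on. The companion identity \eqref{bd:eqn:2:three:order:Edit} follows by the mirror procedure: continuity rewrites $u^{(1)}_{xxx}, u^{(1)}_{xxy}, u^{(1)}_{xyy}$, the same vorticity identity handles the leftover $u^{(1)}_{yyy}$, and I then form $(A_1')+(A_2)$.

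I expect the only genuine obstacle to be bookkeeping: confirming that, after the continuity and vorticity substitutions, the coefficients of the eight third-order velocity derivatives and of the $\phi$- and $f$-terms assemble into exactly the compact polynomials in $\ka_1,\ka_2$ appearing on both sides of the proposition. There is no conceptual difficulty beyond this—the entire content is that continuity removes three of the four cross-component derivatives while the single vorticity identity removes the fourth, which is precisely why exactly one copy of the force term, $\nu^{-1}\ka_1^3(f^{(2)}_x - f^{(1)}_y)$ in the first equation and $\nu^{-1}\ka_2^3(f^{(1)}_y - f^{(2)}_x)$ in the second, survives on each right-hand side.
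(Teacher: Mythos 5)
Your proposal is correct and takes essentially the same route as the paper: both proofs rest on exactly the same seven ingredients — the three second derivatives of the continuity equation, the vorticity identity \eqref{indepen:p:v} obtained by cross-differentiating the two momentum components (with $p_{xy}=p_{yx}$ cancelling the pressure), and the third tangential derivatives of the Dirichlet data — and eliminate the four cross-component third derivatives. The only difference is presentational: the paper states the eliminating linear combination up front and verifies the cancellation, while you derive the same combination by solving for $u^{(2)}_{xxy},u^{(2)}_{xyy},u^{(2)}_{yyy}$ (continuity) and $u^{(2)}_{xxx}$ (vorticity plus continuity) and substituting, which yields identical coefficients.
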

\begin{proof}
We only prove  \eqref{bd:eqn:1:three:order:Edit} and \eqref{indepen:p:v}, as \eqref{bd:eqn:2:three:order:Edit} can be proved similarly.
It follows directly from \eqref{Model:Original} and \eqref{parametrized} that on $\Gamma$, we have
\begin{align}
	-u^{(1)}_{xxx}-	u^{(2)}_{xxy} &= \phi_{xx},\label{ident:1}\\
	-u^{(1)}_{xxy}-	u^{(2)}_{xyy} &= \phi_{xy},\label{ident:2}\\
	-u^{(1)}_{xyy}-	u^{(2)}_{yyy} &= \phi_{yy},\label{ident:3}\\
	-\nu u^{(2)}_{xxx}	-\nu u^{(2)}_{xyy} + p_{xy} & = f^{(2)}_x,\label{ident:4}\\
	-\nu u^{(1)}_{xxy}	-\nu u^{(1)}_{yyy} + p_{xy} & = f^{(1)}_y,\label{ident:5}\\
u_{ttt}^{(1)}= g_{ttt}^{(1)}\Rightarrow \ka_1^3u^{(1)}_{xxx}+3\ka_1^2\ka_2u^{(1)}_{xxy}+3\ka_1\ka_2^2u^{(1)}_{xyy}+\ka_2^3u^{(1)}_{yyy} & =\rho_1, \label{ident:6}\\
u_{ttt}^{(2)} = g_{ttt}^{(2)} \Rightarrow \ka_1^3u^{(2)}_{xxx}+3\ka_1^2\ka_2u^{(2)}_{xxy}+3\ka_1\ka_2^2u^{(2)}_{xyy}+\ka_2^3u^{(2)}_{yyy} &=\rho_2. \label{ident:7}
\end{align}
Then
$ 3\ka_1^2\ka_2 \times \eqref{ident:1}+\ka_1(3\ka_2^2-\ka_1^2)  \times \eqref{ident:2} +\ka_2^3 \times \eqref{ident:3} +\nu^{-1}\ka_1^3 \times (  \eqref{ident:4} - \eqref{ident:5}) +\eqref{ident:6}   + \eqref{ident:7}$ yields
\begin{align*}
&3{\textup\ka}_1^2{\textup\ka}_2\phi_{xx}+{\textup\ka}_1(3{\textup\ka}_2^2-{\textup\ka}_1^2)\phi_{xy} +{\textup\ka}_2^3\phi_{yy} +\nu^{-1}{\textup\ka}_1^3( f^{(2)}_{x} - f^{(1)}_{y}) + \rho_1+\rho_2\\	
& =3\ka_1^2\ka_2 (	-u^{(1)}_{xxx}-	 u^{(2)}_{xxy})+\ka_1(3\ka_2^2-\ka_1^2)  (-u^{(1)}_{xxy}-	u^{(2)}_{xyy}) +\ka_2^3 (	-u^{(1)}_{xyy}-	 u^{(2)}_{yyy}) \\
&\quad +\nu^{-1}\ka_1^3(  	-\nu u^{(2)}_{xxx}	-\nu u^{(2)}_{xyy}  +\nu u^{(1)}_{xxy}	+\nu u^{(1)}_{yyy} )+\ka_1^3u^{(1)}_{xxx}+3\ka_1^2\ka_2u^{(1)}_{xxy} \\
&\quad+3\ka_1\ka_2^2u^{(1)}_{xyy} +\ka_2^3u^{(1)}_{yyy}+\ka_1^3u^{(2)}_{xxx}+3\ka_1^2\ka_2u^{(2)}_{xxy}+3\ka_1\ka_2^2u^{(2)}_{xyy}+\ka_2^3u^{(2)}_{yyy}\\
& =(\ka_1^3-3\ka_1^2\ka_2)u^{(1)}_{xxx}+(-3\ka_1\ka_2^2+\ka_1^3+\ka_1^3+3\ka_1^2\ka_2)  u^{(1)}_{xxy}+(3\ka_1\ka_2^2-\ka_2^3) u^{(1)}_{xyy}+	(\ka_1^3+\ka_2^3 ) u^{(1)}_{yyy}\\
& \quad + (\ka_1^3-\ka_1^3)u^{(2)}_{xxx}+(3\ka_1^2\ka_2 -3\ka_1^2\ka_2)u^{(2)}_{xxy}+ (-3\ka_1\ka_2^2+\ka_1^3-\ka_1^3+3\ka_1\ka_2^2)  	 u^{(2)}_{xyy} +(\ka_2^3-\ka_2^3) 	 u^{(2)}_{yyy} \\
& =(\ka_1^3-3\ka_1^2\ka_2)u^{(1)}_{xxx}+(2\ka_1^3-3\ka_1\ka_2^2+3\ka_1^2\ka_2)  u^{(1)}_{xxy}+(3\ka_1\ka_2^2-\ka_2^3) u^{(1)}_{xyy}+	(\ka_1^3+\ka_2^3 ) u^{(1)}_{yyy}, \quad \text{on} \quad \Gamma.
\end{align*}
So \eqref{bd:eqn:1:three:order:Edit} is proved.

Next, using  $-\nu \Delta \bu+\nab p={\bm f}$, we have
\[
\nu^{-1}f^{(1)}_y-\nu^{-1} f^{(2)}_x=
[- \Delta u^{(1)}_{y}+\nu^{-1} p_{xy}]-
[- \Delta u^{(2)}_{x}+\nu^{-1} p_{yx}]
=\Delta [u^{(2)}_{x}-u^{(1)}_{y}],
\]
which only depends on $\bu$. This proves \eqref{indepen:p:v}.
\end{proof}

If $\p\Omega$ only contains edges parallel to the coordinate axis, i.e.
if $(\ka_1,\ka_2)=(0,1)$ in \eqref{parametrized} for  $x=\text{const}$ and  $(\ka_1,\ka_2)=(1,0)$ in \eqref{parametrized} for  $y=\text{const}$, then \cref{biharmonic:propEdit,three:order:pde:propEdit}
reduce to the following reformulation.

\begin{theorem}\label{thm:all:pde:2D}
Assume that the boundary of a domain $\Omega$ has a decomposition
$\p\Omega = \big(\cup_{m=1}^{N_x} \overline{\Gamma_m^x}\big)\cup \big(\cup_{n=1}^{N_y} \overline{\Gamma_n^y}\big)$,
where $\Gamma_m^x = \{(\alpha_m,y):\ a^x_m< y<b^x_m\}$ and $\Gamma_n^y = \{(x,\beta_n):\ a^y_n< x< b_n^y\}$
with $\alpha_m,a^x_m, b^x_m, \beta_n, a^y_n, b^y_n \in \bbR$ and $N_x,N_y\in \N$. That is, the variable $x$ is constant on $\Gamma_m^x$, and the variable
$y$ is constant on $\Gamma_n^y$.
Suppose that  $\bu=(u^{(1)},u^{(2)})$,  ${\bm f}=(f^{(1)},f^{(2)})$,  ${\bm g}=(g^{(1)},g^{(2)})$, 	$\phi$ and $p$ satisfy \eqref{Model:Original} and are smooth on $\overline{ \Omega}$.
Then the following PDEs hold
\begin{equation}\label{eqn:FourthOrderPDE}
\left\{
\begin{array}{ll}
 -\Delta^2 u^{(1)} = \psi^{(1)} \qquad &\text{in }\Omega,\\
 u^{(1)} =g^{(1)} \quad &\text{on }\p\Omega,\\
 u^{(1)}_{xyy} = \varphi^{(1)}_1    &\text{on }\Gamma_m^x,\\
u^{(1)}_{yyy}	+2u^{(1)}_{xxy}  =\varphi^{(1)}_2  &\text{on }\Gamma_n^y,
\end{array}\right.
\qquad
\left\{\begin{array}{ll}
- \Delta^2 u^{(2)} = \psi^{(2)} \qquad &\text{in }\Omega,\\
 u^{(2)} =g^{(2)} \quad &\text{on }\p\Omega,\\
 u^{(2)}_{xxx}+ 2u^{(2)}_{xyy} = \varphi^{(2)}_1   &\text{on }\Gamma_m^x,\\
u^{(2)}_{xxy}    =\varphi^{(2)}_2  &\text{on }  \Gamma_n^y,
\end{array}\right.
\end{equation}
for $m=1,\ldots,N_x$ and $n=1,\ldots,N_y$,
where $(\psi^{(1)},\psi^{(2)})={\bm \psi}=\Delta (\nab \phi) + \nu^{-1}[ \Delta {\bm f}- \nab (\nab\cdot {\bm f})]$ is defined in \eqref{eqn:BiharmonicEdit}, and
		\begin{equation}	 \label{RHS:2d}
		\begin{split}
		& \varphi^{(1)}_1:=-\phi_{yy}-g^{(2)}_{yyy},  \qquad \varphi^{(1)}_2:=
-\nu^{-1}[
f^{(1)}_{y}-f^{(2)}_{x}]- \phi_{xy}+g^{(2)}_{xxx},\qquad   \\
		&  \varphi^{(2)}_2:=-\phi_{xx}-g^{(1)}_{xxx}, \qquad \varphi^{(2)}_1:=
\nu^{-1}[f^{(1)}_{y}-  f^{(2)}_{x}]
-  \phi_{xy}+g^{(1)}_{yyy}.
		\end{split}
		\end{equation}	
	Importantly,  the right-hand side functions $\bpsi$ and $\varphi^{(r)}_j$ with $r,j\in \{1,2\}$ are all completely independent of both the pressure $p$ and the constant kinematic viscosity $\nu$ in \eqref{Model:Original} due to
\begin{equation}\label{nop:all}
\nu^{-1} [\Delta {\bm f}- \nab (\nab \cdot {\bm f})]=- \Delta^2 \bu+\nab (\nab \cdot \Delta \bu),
\qquad \nu^{-1} [f^{(1)}_y- f^{(2)}_x]=\Delta [u^{(2)}_{x}-u^{(1)}_{y}].
\end{equation}
\end{theorem}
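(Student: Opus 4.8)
The plan is to obtain each of the four ingredients in \eqref{eqn:FourthOrderPDE}---the interior biharmonic equation, the Dirichlet condition, and the two third-order boundary conditions---by directly specializing the reformulations already established in \cref{biharmonic:propEdit,three:order:pde:propEdit} to the axis-aligned geometry. No genuinely new PDE identities are required; the content of the theorem is that the general formulas collapse into the stated compact form once the tangent directions are fixed to $(\ka_1,\ka_2)=(0,1)$ and $(1,0)$.

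First, I would note that the interior identity $-\Delta^2 u^{(r)}=\psi^{(r)}$, together with the definition $\bpsi=\Delta(\nab\phi)+\nu^{-1}[\Delta{\bm f}-\nab(\nab\cdot{\bm f})]$, is precisely the componentwise statement of \eqref{eqn:BiharmonicEdit} in \cref{biharmonic:propEdit}, which applies because $\bu,{\bm f},p,\phi$ are smooth on $\overline\Omega$ and satisfy \eqref{Model:Original}. The pressure- and viscosity-independence asserted in \eqref{nop:all} is nothing more than a restatement of \eqref{f:nop} and \eqref{indepen:p:v}, so that part is immediate. The Dirichlet condition $u^{(r)}=g^{(r)}$ on $\p\Omega$ is read off directly from \eqref{Model:Original}.

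The substantive step is to derive the third-order conditions from \cref{three:order:pde:propEdit}. On a vertical edge $\Gamma_m^x$, where $x$ is constant and the tangent is aligned with the $y$-axis, I set $(\ka_1,\ka_2)=(0,1)$; on a horizontal edge $\Gamma_n^y$ I set $(\ka_1,\ka_2)=(1,0)$. Substituting these values into \eqref{bd:eqn:1:three:order:Edit} and \eqref{bd:eqn:2:three:order:Edit} makes most of the polynomial coefficients vanish and reduces $\rho_r$ to a single surviving term: $g^{(r)}_{yyy}$ when $(\ka_1,\ka_2)=(0,1)$ and $g^{(r)}_{xxx}$ when $(\ka_1,\ka_2)=(1,0)$. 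For instance, on $\Gamma_m^x$ the first identity becomes $-u^{(1)}_{xyy}+u^{(1)}_{yyy}=\phi_{yy}+g^{(1)}_{yyy}+g^{(2)}_{yyy}$, and the three remaining specializations are entirely analogous.

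The one extra observation needed to reach \eqref{eqn:FourthOrderPDE} is that the pure tangential third derivative of the velocity coincides with that of the prescribed data: since $u^{(r)}=g^{(r)}$ along the edge, differentiating this identity tangentially gives $u^{(1)}_{yyy}=g^{(1)}_{yyy}$ on $\Gamma_m^x$ and $u^{(1)}_{xxx}=g^{(1)}_{xxx}$ on $\Gamma_n^y$, with the corresponding identities for $u^{(2)}$. Substituting these cancels the pure tangential term in each specialized equation; e.g., eliminating $u^{(1)}_{yyy}$ above yields $u^{(1)}_{xyy}=-\phi_{yy}-g^{(2)}_{yyy}=\varphi^{(1)}_1$ exactly as stated, and the cases $\varphi^{(1)}_2,\varphi^{(2)}_1,\varphi^{(2)}_2$ follow identically, with the force contribution rewritten through $\nu^{-1}[f^{(1)}_y-f^{(2)}_x]=\Delta[u^{(2)}_x-u^{(1)}_y]$ to exhibit its robustness. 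I expect the only real care to lie in this bookkeeping---tracking which coefficients vanish and matching the sign conventions in each $\varphi^{(r)}_j$---rather than in any conceptual obstacle, since all the analytic work has already been done in \cref{biharmonic:propEdit,three:order:pde:propEdit}.
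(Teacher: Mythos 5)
Your proposal is correct and follows essentially the same route as the paper: the interior biharmonic equation and \eqref{nop:all} come from \cref{biharmonic:propEdit} (i.e., \eqref{f:nop} and \eqref{indepen:p:v}), and the third-order boundary conditions come from specializing \eqref{bd:eqn:1:three:order:Edit}--\eqref{bd:eqn:2:three:order:Edit} to $(\ka_1,\ka_2)=(0,1)$ on $\Gamma_m^x$ and $(1,0)$ on $\Gamma_n^y$. If anything, you are more explicit than the paper, which silently absorbs the final cancellation of the pure tangential third derivative (e.g.\ using $u^{(1)}_{yyy}=g^{(1)}_{yyy}$ on $\Gamma_m^x$ to pass from $-u^{(1)}_{xyy}+u^{(1)}_{yyy}=\phi_{yy}+g^{(1)}_{yyy}+g^{(2)}_{yyy}$ to $u^{(1)}_{xyy}=\varphi^{(1)}_1$) when it asserts that the specialization ``results in'' the stated conditions.
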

\begin{proof}
The identities $-\Delta^2 u^{(1)}=\psi^{(1)}$, $-\Delta^2 u^{(2)}=\psi^{(2)}$, and the independence of $\bpsi$ from $p$ follow directly from
\cref{biharmonic:propEdit}.

 \eqref{bd:eqn:1:three:order:Edit} with $(\ka_1,\ka_2)=(0,1)$ and  $(\ka_1,\ka_2)=(1,0)$ results in the third-order PDEs  $u^{(1)}_{xyy} = \varphi^{(1)}_1$  and
 $u^{(1)}_{yyy}	+2u^{(1)}_{xxy}  =\varphi^{(1)}_2$, respectively.  Likewise, \eqref{bd:eqn:2:three:order:Edit} with $(\ka_1,\ka_2)=(0,1)$ and  $(\ka_1,\ka_2)=(1,0)$ results in the third-order PDEs  $ u^{(2)}_{xxx}+ 2u^{(2)}_{xyy} = \varphi^{(2)}_1$  and
 $u^{(2)}_{xxy}    =\varphi^{(2)}_2$, respectively.

The first identity of \eqref{nop:all} has already been proved in \eqref{f:nop}, and the second identity in 
\eqref{nop:all} is a restatement of
\eqref{indepen:p:v}.
\end{proof}	
\begin{remark}
	
 Note that, similar to the well-known streamfunction formulation (see, e.g., \cite{Glowinski1979})
		the PDEs \eqref{eqn:BiharmonicEdit} and \eqref{eqn:FourthOrderPDE}
		are fourth-order problems involving the biharmonic operator.
		However these formulations are clearly distinct, as \eqref{eqn:BiharmonicEdit}
		and \eqref{eqn:FourthOrderPDE} are formulated solely in terms of the velocity.
		Furthermore, unlike the streamfunction formulation, we do not require
		the domain $\Omega$ to be simply connected.
\end{remark}

\begin{remark}
After determining the unknown velocity fields $u^{(1)}$ and $u^{(2)}$, we can determine  the gradient of the pressure $p$ through $\nab p={\bm f}+\nu \Delta u$ in \eqref{Model:Original}.
\end{remark}

	\section{FDM on the Uniform Cartesian Grid for the 2D Stokes Problem}\label{FDMs:all:points}

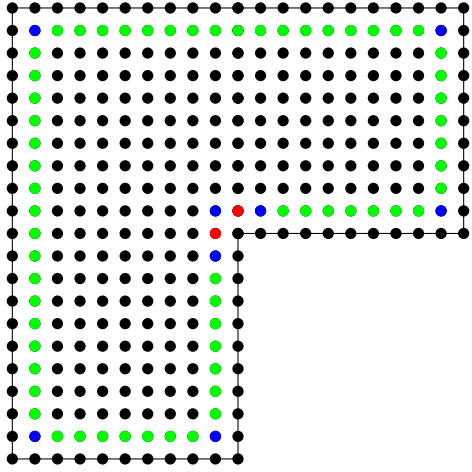
\begin{figure}[htbp]
	\centering
	
	\begin{tikzpicture}[scale = 0.6]
		\draw	(-5, -5) -- (-5, 5) -- (5, 5) -- (5, 0) -- (0, 0)-- (0, -5) --(-5,-5);	
		
		
		\foreach \y in {-10,...,10}
		\foreach \x in {-10,...,0}
		{\node at (0.5*\x,0.5*\y)[circle,fill,inner sep=1.5pt,color=black]{};}
		
		\foreach \y in {0,...,10}
		\foreach \x in {0,...,10}
		{\node at (0.5*\x,0.5*\y)[circle,fill,inner sep=1.5pt,color=black]{};}

		\foreach \y in {-8,...,8}
		{\node at (-4.5,0.5*\y)[circle,fill,inner sep=1.5pt,color=green]{};}
		
		\foreach \y in {-8,...,-2}
		{\node at (-0.5,0.5*\y)[circle,fill,inner sep=1.5pt,color=green]{};}	
		
		\foreach \y in {2,...,8}
		{\node at (4.5,0.5*\y)[circle,fill,inner sep=1.5pt,color=green]{};}

		\foreach \x in {-8,...,-2}
		{\node at (0.5*\x,-4.5)[circle,fill,inner sep=1.5pt,color=green]{};}
		
		\foreach \x in {-8,...,8}
		{\node at (0.5*\x,4.5)[circle,fill,inner sep=1.5pt,color=green]{};}	
		
		\foreach \x in {2,...,8}
		{\node at (0.5*\x,0.5)[circle,fill,inner sep=1.5pt,color=green]{};}	
		
		\node at (-0.5,0.5)[circle,fill,inner sep=1.5pt,color=blue]{};
		\node at (-0.5,-0.5)[circle,fill,inner sep=1.5pt,color=blue]{};
		\node at (0.5,0.5)[circle,fill,inner sep=1.5pt,color=blue]{};
		\node at (4.5,0.5)[circle,fill,inner sep=1.5pt,color=blue]{};
		\node at (4.5,4.5)[circle,fill,inner sep=1.5pt,color=blue]{};
		\node at (-4.5,4.5)[circle,fill,inner sep=1.5pt,color=blue]{};
		\node at (-4.5,-4.5)[circle,fill,inner sep=1.5pt,color=blue]{};
		\node at (-0.5,-4.5)[circle,fill,inner sep=1.5pt,color=blue]{};
		
		\node at (0,0.5)[circle,fill,inner sep=1.5pt,color=red]{};
		\node at (-0.5,0)[circle,fill,inner sep=1.5pt,color=red]{};
	\end{tikzpicture}
	\caption{Illustration of the decomposition of the grid points.
		Points in $\Gamma^x_{m,h}$ or $\Gamma^y_{n,h}$ are depicted
		in green, grid points $(\xa_i,\ya_j)$ satisfying $(\xa_i\pm h,\ya_j\pm h)$ or $(\xa_i\pm h,\ya_j\mp h)\in \Omega^c$
		are depicted in blue, and grid points $(\xa_i,\ya_j)$ satisfying $(\xa_i\pm h,\ya_j)\in \Omega^c$ and $(\xa_i\pm 2h,\ya_j)\in \Gamma_n^y$
		or $(\xa_i ,\ya_j\pm h)\in \Omega^c$ and $(\xa_i,\ya_j\pm 2h)\in \Gamma_m^x$ are depicted in red.
	}
	\label{FDM:corner:special}
\end{figure}	
In this section, we construct a finite difference method (FDM)
based on the fourth-order reformulation of the Stokes problem \eqref{eqn:FourthOrderPDE}.
To this end, we assume that the domain satisfies the conditions in Theorem \ref{thm:all:pde:2D}
(i.e., the boundary of $\Omega$ aligns with the coordinate axes). 	 To express the formula of our FDM concisely, we assume $\bu=0$ (i.e., $\bm g=0$) on $\partial \Omega$ in \eqref{Model:Original}
(no-slip boundary condition) in this section. However, it is straightforward to extend the proposed FDM to the non-homogeneous Dirichlet boundary condition, and in all numerical examples in this paper, we test $\bu|_{\partial \Omega} \ne \bm 0$. For simplicity in presentation,
we construct a uniform partition of the domain based on a single discretization parameter $h>0$ as follows:
\begin{equation}
\overline\Omega_h :=  \overline \Omega \cap \left(h \mathbb{Z}^2\right).
\end{equation}
Recall that $\p\Omega = \big(\cup_{m=1}^{N_x} \overline{\Gamma_m^x}\big)\cup \big(\cup_{n=1}^{N_y} \overline{\Gamma_n^y}\big)$,
where $\Gamma_m^x: = \{(\alpha_m,y):\ a^x_m< y< b^x_m\}$ and $\Gamma_n^y: = \{(x,\beta_n):\ a^y_n< x< b_n^y\}$
with $\alpha_m,a^x_m, b^x_m, \beta_n, a^y_n, b^y_n \in \bbR$ and $N_x,N_y\in \N$.
We assume, that for each $1\le m\le N_x$ and $1\le n\le N_y$, there
exists $i_m,j_n\in \mathbb{Z}$ such that $\alpha_m = i_m h$ and
$\beta_n = j_n h$, i.e., we assume the grid points intersect the boundary edges (see Figure \ref{FDM:corner:special}).
Define the set of nodes
in a neighborhood of $\Gamma_{m}^x$ and $\Gamma_n^y$ respectively as (cf.~green points in Figure \ref{FDM:corner:special})
\be\label{Gamma:bd}
\begin{split}
\Gamma_{m,h}^x &:= \{(\alpha_m\pm h,\ya_j):\ a_m^x+2h\le \ya_j\le b_m^x-2h\},\\
\Gamma_{n,h}^y &:= \{(\xa_i,\beta_n\pm h):\ a_n^y+2h\le \xa_i\le b_n^y-2h\},
\end{split}
\ee
where the signs in the above definitions are chosen such that the grid points lie in the interior of the domain.
Denote by $\sigma^x_m,\sigma^y_n\in \{-1,1\}$ such that $\alpha_m+2\sigma^x_m h,\beta_n+2\sigma^y_n h\in \overline\Omega$.
Finally, we set
\[
\Omega_h :=\{(\xa_i,\ya_j) :\   (\xa_i+kh,\ya_j+\ell h)\in \overline{\Omega} \quad \text{ for any } k,\ell=0,\pm1,\pm2 \},
\]
to be the set of interior grid points (cf.~black points in Figure \ref{FDM:corner:special}). Finally, 
we define the set of corner points
\begin{equation}\label{Omega:c}
\Omega^c := \{(\alpha_m,\beta_n):\ 1\le m\le N_x,\ 1\le n\le N_y\}\cap \overline{\Omega}.
\end{equation}

We define  $\bu_h=(u^{(1)}_h,u^{(2)}_h)$  to be the numerical  velocity  approximation using our proposed FDM with the uniform mesh size $h$.
To describe our sixth-order FDM clearly at all grid points, we derive a sixth-order 25-point FD operator at  interior grid points in \cref{FDMs:interior},  a sixth-order 15-point FD operator at  boundary side points in \cref{FDMs:sides},  a sixth-order 2-point FD operator at  boundary corner points in \cref{FDMs:corners}, and an arbitrary high-order 1-point FD operator at special grid points in \cref{FDMs:special:point}. In \cref{FDMs:singular}, we modify the right-hand side of the sixth-order FDM to approximate the velocity $\bu$ with less regularity.
These sixth-order FDMs are constructing using the ideas  in \cite{Feng2024}.
However,  to simplify the presentation, we 
simply provide the stencils of the FDMs directly and use them to prove the consistency order.
%

\subsection{A sixth-order FD operator for smooth $\bu$ at interior grid points}\label{FDMs:interior}
We present a 25-point FD operator with  the sixth-order of consistency at the interior grid point in the following lemma. 
\begin{lemma}\label{theorem:interior}
We define the following 25-point FD operator on the set of interior grid points $\Omega_h$:
\be\label{FDMs:u:interior}
\mathcal{L}_h (\bu_h)_{i,j} :=\sum_{k,\ell=-2}^2 C_{k,\ell} (\bu_h)_{i+k,j+\ell},
\ee
where $C_{0,0}:=-13$,
\be
\label{Ckl:interior}
\begin{split}
& C_{-2,-2}=C_{-2,2}=C_{2,-2}=C_{2,2}:=-\tfrac{1}{36}, \qquad C_{-2,0}=C_{2,0}=C_{0,-2}=C_{0,2}:=-\tfrac{1}{2} ,\\
& C_{-2,-1}=C_{-2,1}= C_{2,-1}=C_{2,1}=C_{-1,-2}=C_{-1,2}=C_{1,-2}=C_{1,2}:=-\tfrac{2}{9},\\
& C_{-1,-1}=C_{-1,1}=C_{1,-1}=C_{1,1}:=\tfrac{2}{9}, \qquad  C_{-1,0}=C_{1,0}=C_{0,-1}=C_{0,1}:=4.
\end{split}
\ee	
We also set
\be\label{bpsi:h}
(\bpsi_h)_{i,j} :=
\left( {\bm\psi}  + \tfrac{h^2}{6} \Delta{\bm\psi}   +h^4\Big(\tfrac1{80} \Delta^2 \bpsi + \tfrac1{90} \tfrac{\p^4 \bpsi}{\p x^2 \p y^2}
\Big)\right)\Big|_{x={\textup\xa}_i,y={\textup\ya}_j}.\\
\ee
Then there holds $\left|\left(h^{-4}\mathcal{L}_h (\bu)- \bpsi_h\right)_{i,j}\right|=\bo(h^6)$ at $({\textup\xa}_i,{\textup\ya}_j) \in \Omega_h$ provided  $\bu$ is smooth.
\end{lemma}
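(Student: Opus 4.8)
The plan is to reduce the claim to a finite set of algebraic identities for the weighted moments of the stencil coefficients $C_{k,\ell}$, which can then be checked by direct summation. Since the lemma assumes $\bu$ is smooth, I would Taylor-expand each nodal value $\bu(\xa_i+kh,\ya_j+\ell h)$ about the center $(\xa_i,\ya_j)$. Substituting into \eqref{FDMs:u:interior} and interchanging the finite sum over $(k,\ell)$ with the Taylor series gives
\[
\mathcal{L}_h(\bu)_{i,j}=\sum_{m,n\ge 0}\frac{h^{m+n}}{m!\,n!}\,M_{m,n}\,\frac{\p^{m+n}\bu}{\p x^m\p y^n}\Big|_{(\xa_i,\ya_j)},
\qquad
M_{m,n}:=\sum_{k,\ell=-2}^{2}C_{k,\ell}\,k^{m}\ell^{n}.
\]
Thus the entire consistency analysis is encoded in the numbers $M_{m,n}$, and the task becomes: show the low-order moments vanish, the surviving moments reproduce $-\Delta^2\bu$ and its corrections, and the tail is of the right order.

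\textbf{Symmetry reduction and the moments to verify.}
From \eqref{Ckl:interior} the coefficients satisfy $C_{k,\ell}=C_{-k,\ell}=C_{k,-\ell}=C_{\ell,k}$. This symmetry immediately forces $M_{m,n}=0$ whenever $m$ or $n$ is odd, and gives $M_{m,n}=M_{n,m}$. In particular every term of \emph{odd} total degree $m+n$ vanishes (one index must be odd), so the degree-$9$ contribution drops out entirely and only even total degrees $0,2,4,6,8,10,\dots$ remain. I would then compute the finitely many surviving moments up to total degree eight by direct evaluation and confirm
\[
M_{0,0}=M_{2,0}=0,\quad M_{4,0}=-24,\ M_{2,2}=-8,\quad M_{6,0}=-120,\ M_{4,2}=-24,
\]
together with $M_{8,0}=-504$, $M_{6,2}=-88$, $M_{4,4}=-56$ (and their reflections via $M_{m,n}=M_{n,m}$). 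These nine identities are exactly the content of the scheme; for instance $M_{0,0}=\sum C_{k,\ell}=0$ and $M_{4,0}=\sum C_{k,\ell}k^4=-24$ both follow by adding up the grouped coefficient values.

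\textbf{Matching against $\bpsi_h$.}
By \cref{thm:all:pde:2D} we have $\bpsi=-\Delta^2\bu$, so $\Delta\bpsi=-\Delta^3\bu$, $\Delta^2\bpsi=-\Delta^4\bu$, and $\tfrac{\p^4}{\p x^2\p y^2}\bpsi=-\tfrac{\p^4}{\p x^2\p y^2}\Delta^2\bu$; this lets me rewrite the target $h^4\bpsi_h$ purely in terms of derivatives of $\bu$. Using the moment values above, the degree-$4$ part of $\mathcal{L}_h(\bu)$ collapses to $-h^4\Delta^2\bu$ (checking $\tfrac{M_{4,0}}{4!}=-1$ for the pure fourth derivatives and $\tfrac{M_{2,2}}{2!2!}=-2$ for the mixed one), the degree-$6$ part to $-\tfrac{h^6}{6}\Delta^3\bu=\tfrac{h^6}{6}\Delta\bpsi$ (from $\tfrac{M_{6,0}}{6!}=-\tfrac16$, $\tfrac{M_{4,2}}{4!2!}=-\tfrac12$), and the degree-$8$ part to $-h^8(\tfrac1{80}\Delta^4\bu+\tfrac1{90}\tfrac{\p^4}{\p x^2\p y^2}\Delta^2\bu)=h^8(\tfrac1{80}\Delta^2\bpsi+\tfrac1{90}\tfrac{\p^4\bpsi}{\p x^2\p y^2})$. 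Hence $\mathcal{L}_h(\bu)_{i,j}=h^4(\bpsi_h)_{i,j}+\bo(h^{10})$, and dividing by $h^4$ yields $\big(h^{-4}\mathcal{L}_h(\bu)-\bpsi_h\big)_{i,j}=\bo(h^6)$, as asserted.

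\textbf{Main obstacle.}
The genuinely load-bearing step is the degree-$8$ match: the two correction constants $\tfrac1{80}$ and $\tfrac1{90}$ in \eqref{bpsi:h} must be reproduced \emph{simultaneously} by the three independent moments $M_{8,0},M_{6,2},M_{4,4}$ after expanding $\Delta^4\bu$ and $\tfrac{\p^4}{\p x^2\p y^2}\Delta^2\bu$ into monomial derivatives and equating coefficients of $\p_x^8\bu$, $\p_x^6\p_y^2\bu$, and $\p_x^4\p_y^4\bu$. This is where the specific rational weights in \eqref{Ckl:interior} are pinned down, and it is the most error-prone bookkeeping. The remaining concern is controlling the degree-$10$ tail uniformly; under the stated smoothness this is routine (bounded tenth-order derivatives give an $\bo(h^{10})$ Lagrange remainder after the vanishing odd-degree terms are discarded), and it is handled by the standard remainder estimate rather than by any further structural argument.
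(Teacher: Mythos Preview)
Your proposal is correct and follows essentially the same approach as the paper: Taylor-expand about $(\xa_i,\ya_j)$, exploit the even symmetry of the stencil to kill all odd-degree contributions, and then match the surviving degree-$4$, degree-$6$, and degree-$8$ terms against $-\Delta^2\bu$, $-\tfrac16\Delta^3\bu$, and $-\tfrac1{80}\Delta^4\bu-\tfrac1{90}\tfrac{\p^4}{\p x^2\p y^2}\Delta^2\bu$ respectively, using $\bpsi=-\Delta^2\bu$. The only cosmetic difference is that you package the computation through the moments $M_{m,n}=\sum_{k,\ell}C_{k,\ell}k^m\ell^n$, whereas the paper simply writes out the expanded derivative combination directly and then regroups; the algebraic content and the load-bearing degree-$8$ identity are identical.
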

\begin{proof}
Using a Taylor approximation at an interior grid point $(\xa_i,\ya_j)$, we have
\be \label{u:original}
\bu(x_i+x,y_j+y)
=
\sum_{(m,n)\in \ind_{M+1}}
\frac{\partial^{m+n}\bu(\xa_i,\ya_j) }{\partial x^m \partial y^n}\frac{x^my^n}{m!n!}+\bo(h^{M+2}), \qquad x,y\in \{0,\pm h, \pm 2h\},
\ee
where
\be \label{ind}
\ind_{M+1}:=\{ (m,n)\in \NN^2 \; : \;  m+n\le M+1 \}, \qquad M+1\in \NN,\qquad \NN:=\N\cup\{0\}.
\ee
For the sake of readability, we omit the subscripts and use the following notation in this proof:
\[
\frac{\partial^{m+n}\bu }{\partial x^m \partial y^n}:=\frac{\partial^{m+n}\bu(\xa_i,\ya_j) }{\partial x^m \partial y^n}, \qquad \frac{\partial^{m+n} \bpsi}{\partial x^m \partial y^n}:=\frac{\partial^{m+n}\bpsi(\xa_i,\ya_j) }{\partial x^m \partial y^n}.
\]
%

A calculation shows, using \eqref{FDMs:u:interior}--\eqref{Ckl:interior}  and \eqref{u:original} with $M=8$,
	 \begin{equation}\label{interior:plug:taylor}
	\begin{split}
	\frac1{h^4}\mathcal{L}_h (\bu)_{i,j}
	& =-\Big( \tfrac{ \partial^{4}\bu }{\partial x^4} + 2\tfrac{\partial^{4}\bu }{\partial x^2 \partial y^2}  +\tfrac{\partial^{4}\bu }{\partial y^4} \Big)
	- \tfrac{h^2}{6}\Big(  \tfrac{\partial^{6}\bu }{\partial x^6} +3 \tfrac{ \partial^{6}\bu }{\partial x^4 \partial y^2}  + 3\tfrac{\partial^{6}\bu }{\partial x^2 \partial y^4}  +\tfrac{\partial^{6}\bu }{\partial y^6} \Big) 	 \\
	&\quad-h^4\Big(  \tfrac{1}{80}\tfrac{\partial^{8}\bu }{\partial x^8}  +  \tfrac{11}{180}\tfrac{\partial^{8}\bu }{\partial x^6 \partial y^2} + \tfrac{7}{72}\tfrac{\partial^{8}\bu }{\partial x^4 \partial y^4}+  \tfrac{11}{180}\tfrac{\partial^{8}\bu }{\partial x^2 \partial y^6}  +\tfrac{1}{80} \tfrac{\partial^{8}\bu }{\partial y^8} \Big) +\bo(h^{6})\\
%
%
& = \bpsi +\tfrac{h^2}{6} \Delta \bpsi\\
&\quad-h^4\Big(  \tfrac{1}{80}\tfrac{\partial^{8}\bu }{\partial x^8}  +  \tfrac{11}{180}\tfrac{\partial^{8}\bu }{\partial x^6 \partial y^2} + \tfrac{7}{72}\tfrac{\partial^{8}\bu }{\partial x^4 \partial y^4}+  \tfrac{11}{180}\tfrac{\partial^{8}\bu }{\partial x^2 \partial y^6}  +\tfrac{1}{80} \tfrac{\partial^{8}\bu }{\partial y^8} \Big) +\bo(h^{6}).
	\end{split}
	\end{equation}
Next, we use a simple calculation and $-\Delta^2 \bu = \bpsi$ to write
\begin{align*}
  &\tfrac{1}{80}\tfrac{\partial^{8}\bu }{\partial x^8}  +  \tfrac{11}{180}\tfrac{\partial^{8}\bu }{\partial x^6 \partial y^2} + \tfrac{7}{72}\tfrac{\partial^{8}\bu}{\partial x^4 \partial y^4}+  \tfrac{11}{180}\tfrac{\partial^{8}\bu }{\partial x^2 \partial y^6}  +\tfrac{1}{80} \tfrac{\partial^{8}\bu }{\partial y^8}\\
  &\qquad = \tfrac1{80} \Delta^4 \bu + \tfrac1{90} \tfrac{\partial^{8}\bu }{\partial x^6 \partial y^2}
  + \tfrac{1}{45} \tfrac{\partial^{8}\bu}{\partial x^4 \partial y^4}+\tfrac1{90}\tfrac{\partial^{8}\bu }{\partial x^2 \partial y^6}\\
  &\qquad = \tfrac1{80} \Delta^4 \bu+ \tfrac1{90} \tfrac{\partial^4}{\p x^2 \p y^2} \Delta^2 \bu
 = -\tfrac1{80}  \Delta^2 \bpsi - \tfrac1{90} \tfrac{\partial^4}{\p x^2 \p y^2} \bpsi.
\end{align*}
Combining this identity with \eqref{interior:plug:taylor} yields
	\begin{align*}
	\frac1{h^4}\mathcal{L}_h (\bu)_{i,j}
	 &=  \bpsi + \tfrac{h^2}{6} \Delta \bpsi + h^4 \left(\tfrac1{80}  \Delta^2 \bpsi + \tfrac1{90} \tfrac{\partial^4}{\p x^2 \p y^2} \bpsi\right)+\bo(h^6)
	  = (\bpsi_h)_{i,j}+\bo(h^6).
	\end{align*}
This proves the result.
\end{proof}
%

\subsection{A sixth-order  FD operator for smooth $\bu$  at  boundary side points}\label{FDMs:sides}
In this section, we propose a 15-point FD operator with the sixth-order of consistency at
boundary side points.
To facilitate the presentation, we define the coefficients
\begin{align}
	& C^{(1)}_{0,-2}=C^{(1)}_{0,2}:=-2,&&  C^{(1)}_{1,-2}=C^{(1)}_{1,2}:=\tfrac{17}{20}, &&  C^{(1)}_{2,-2}=C^{(1)}_{2,2}:=-\tfrac{2}{45}, \notag \\
	& C^{(1)}_{0,-1}=C^{(1)}_{0,1}:=11,   && C^{(1)}_{1,-1}=C^{(1)}_{1,1}:=-\tfrac{49}{10}, && C^{(1)}_{2,-1}=C^{(1)}_{2,1}:=\tfrac{23}{45},\label{Ckl:1:bd}\\
	& C^{(1)}_{0,0}:=-18,  && C^{(1)}_{1,0}:=\tfrac{81}{10}, && C^{(1)}_{2,0}:=-\tfrac{14}{15},\notag
\end{align}	
\begin{align}
	& C^{(2)}_{0,-2}=C^{(2)}_{0,2}:=\tfrac{107}{120}, &&C^{(2)}_{1,-2}=C^{(2)}_{1,2}:=\tfrac{7}{30},  && C^{(2)}_{2,-2}=C^{(2)}_{2,2}:=\tfrac{1}{24}, \notag  \\
	 &C^{(2)}_{0,-1}=C^{(2)}_{0,1}:=-\tfrac{67}{15}, &&  C^{(2)}_{1,-1}=C^{(2)}_{1,1}:=-\tfrac{1}{30},&&C^{(2)}_{2,-1}=C^{(2)}_{2,1}:=\tfrac{1}{5},  \label{Ckl:2:bd}\\
	 &C^{(2)}_{0,0}:=\tfrac{203}{20},  &&C^{(2)}_{1,0}:=-\tfrac{17}{5}, && C^{(2)}_{2,0}:=\tfrac{31}{60},\notag
\end{align}	
and the functions
\begin{equation}\label{rhos}
\begin{split}
\varphi^{(1)}_{1,h}
&:=	 \varphi^{(1)}_1
 - \tfrac{7h^2}{20} \tfrac{d^2\varphi^{(1)}_1}{dy^2}  - \tfrac{h^3}{4}\tfrac{\partial^2 \psi^{(1)} }{\partial y^2}\\
 &\qquad   - h^4(\tfrac{133 }{360 }  \tfrac{d^4\varphi^{(1)}_1}{dy^4}+ \tfrac{3}{10}\tfrac{\partial^3 \psi^{(1)} }{\partial x\partial y^2})-h^5 (  \tfrac{5}{24 }\tfrac{\partial^4 \psi^{(1)} }{\partial x^2\partial y^2}- \tfrac{1}{16 }\tfrac{\partial^4 \psi^{(1)} }{\partial y^4} ),\\
\varphi^{(2)}_{1,h}
&:= \varphi^{(2)}_1    - \tfrac{3 h}{2} \psi^{(2)} + h^2( \tfrac{1}{5  }  \tfrac{d^2 \varphi^{(2)}_1}{dy^2}  - \tfrac{5}{4  }\tfrac{\partial \psi^{(2)} }{\partial x}) - h^3(\tfrac{3}{4}\tfrac{\partial^2 \psi^{(2)} }{\partial x^2}+ \tfrac{3}{10}\tfrac{\partial^2 \psi^{(2)} }{\partial y^2} ) \\
	&  \qquad   - h^4(  \tfrac{1}{360}\tfrac{d^4  \varphi^{(2)}_1}{dy^4}   +\tfrac{43}{120}\tfrac{\partial^3 \psi^{(2)} }{\partial x^3}+\tfrac{31}{120}\tfrac{\partial^3 \psi^{(2)} }{\partial x \partial y^2})-h^5(    \tfrac{23}{160}\tfrac{\partial^4 \psi^{(2)} }{\partial x^4}+ \tfrac{13}{80}\tfrac{\partial^4 \psi^{(2)} }{\partial x^2\partial y^2}+\tfrac{7}{480}\tfrac{\partial^4 \psi^{(2)} }{\partial y^4} ),\\
\varphi^{(2)}_{2,h}
&:=	 \varphi^{(2)}_2
 - \tfrac{7h^2}{20} \tfrac{d^2\varphi^{(2)}_2}{dx^2}  - \tfrac{h^3}{4}\tfrac{\partial^2 \psi^{(2)} }{\partial x^2}\\
 &\qquad   - h^4(\tfrac{133 }{360 }  \tfrac{d^4\varphi^{(2)}_2}{dx^4}+ \tfrac{3}{10}\tfrac{\partial^3 \psi^{(2)} }{\partial x^2\partial y})+h^5 ( \tfrac{1}{16 }\tfrac{\partial^4 \psi^{(2)} }{\partial x^4} - \tfrac{5}{24 }\tfrac{\partial^4 \psi^{(2)} }{\partial x^2\partial y^2}),\\
\varphi^{(1)}_{2,h}
&:= \varphi^{(1)}_2    - \tfrac{3 h}{2} \psi^{(1)} + h^2( \tfrac{1}{5  }  \tfrac{d^2 \varphi^{(1)}_2}{dx^2}  - \tfrac{5}{4  }\tfrac{\partial \psi^{(1)} }{\partial y}) - h^3( \tfrac{3}{10}\tfrac{\partial^2 \psi^{(1)} }{\partial x^2} +\tfrac{3}{4}\tfrac{\partial^2 \psi^{(1)} }{\partial y^2}) \\
	&  \qquad   - h^4(  \tfrac{1}{360}\tfrac{d^4  \varphi^{(1)}_2}{dx^4}   +\tfrac{31}{120}\tfrac{\partial^3 \psi^{(1)} }{\partial x^2\partial y }+\tfrac{43}{120}\tfrac{\partial^3 \psi^{(1)} }{\partial y^3})-h^5(  \tfrac{7}{480}\tfrac{\partial^4 \psi^{(1)} }{\partial x^4} + \tfrac{13}{80}\tfrac{\partial^4 \psi^{(1)} }{\partial x^2\partial y^2} + \tfrac{23}{160}\tfrac{\partial^4 \psi^{(1)} }{\partial y^4}) ,
\end{split}
\end{equation}	
where $\bpsi$ and $\varphi^{(r)}_j$ are defined in \eqref{RHS:2d}.

Recall that $\Gamma_{m,h}^x$ and $\Gamma_{n,h}^y$ in \eqref{Gamma:bd} denote  the set of nodes
in a neighborhood of $\Gamma_{m}^x$ and $\Gamma_n^y$ respectively (see green points  in Figure \ref{FDM:corner:special}). 
A 15-point FD operator at
boundary side points is given in the following lemma.
\begin{lemma}\label{theorem:side:1:generalized}
Define the following 15-point FD operators ($r=1,2$)
\begin{equation}
\label{scheme:u1:side:1}
\begin{split}
\mathcal{L}^{x,r}_h (u_h^{(r)})_{i,j} :=&\sum_{k=0}^2\sum_{\ell=-2}^2C^{(r)}_{k,\ell} (u_h^{(r)})_{i+\sigma_m^x k,j+\ell},\qquad \text{on }\Gamma_{m,h}^x,\\
\mathcal{L}^{y,r}_h (u_h^{(r)})_{i,j} := &\sum_{k=-2}^2\sum_{\ell=0}^2C^{(r')}_{\ell,k} (u_h^{(r)})_{i+k,j+\sigma_n^y\ell},\qquad \text{on }\Gamma_{n,h}^y,
\end{split}
\end{equation}
where $r'=1$ if $r=2$ and $r'=2$ if $r=1$, and $\sigma^x_m,\sigma^y_n\in \{-1,1\}$ such that $\alpha_m+2\sigma^x_m h,\beta_n+2\sigma^y_n h\in \overline\Omega$. Then there holds  $\left| \frac1{h^3} \mathcal{L}^{x,r}_h (u^{(r)})_{i,j}-\varphi_{1,\sigma_m^x h}^{(r)}\right| = \bo(h^6)$
at $({\textup\xa}_i,{\textup\ya}_j) \in \Gamma_{m,h}^x$ and $\left| \frac1{h^3} \mathcal{L}^{y,r}_h (u^{(r)})_{i,j}-\varphi_{2,\sigma_n^y h}^{(r)}\right| = \bo(h^6)$ at $({\textup\xa}_i,{\textup\ya}_j) \in \Gamma_{n,h}^y$ provided that $\bu$ is smooth, and where $\varphi_{1,\sigma_m^x h}^{(r)}$ and $\varphi_{2,\sigma_n^y h}^{(r)}$ are evaluated at $(\alpha_m, {\textup\ya}_j)$ and $( {\textup\xa}_i,\beta_n)$ respectively.
\end{lemma}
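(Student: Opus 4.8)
The plan is to mirror the proof of \cref{theorem:interior}: substitute a Taylor expansion of the exact solution into the stencil \eqref{scheme:u1:side:1}, and then use the reformulated PDEs of \cref{thm:all:pde:2D} to convert the resulting raw combination of partial derivatives into the prescribed right-hand sides of \eqref{rhos}. A first reduction exploits the built-in $x\leftrightarrow y$ symmetry of the operators in \eqref{scheme:u1:side:1}. Indeed, the map $x\leftrightarrow y$ together with $(u^{(1)},u^{(2)},g^{(1)},g^{(2)})\mapsto(u^{(2)},u^{(1)},g^{(2)},g^{(1)})$ sends the biharmonic equation $-\Delta^2 u^{(r)}=\psi^{(r)}$, the third-order boundary PDEs in \eqref{eqn:FourthOrderPDE}, and the right-hand sides in \eqref{RHS:2d} into one another (one checks $\psi^{(1)}\leftrightarrow\psi^{(2)}$, $\varphi^{(1)}_1\leftrightarrow\varphi^{(2)}_2$, and $\varphi^{(2)}_1\leftrightarrow\varphi^{(1)}_2$). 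Under the same swap the operator $\mathcal{L}^{y,r}_h$, whose coefficients are the \emph{transposed} entries $C^{(r')}_{\ell,k}$, becomes exactly $\mathcal{L}^{x,r'}_h$. Hence it suffices to establish the two estimates on $\Gamma_{m,h}^x$, namely for $\mathcal{L}^{x,1}_h$ with target $\varphi^{(1)}_{1,\sigma^x_m h}$ and for $\mathcal{L}^{x,2}_h$ with target $\varphi^{(2)}_{1,\sigma^x_m h}$; the $\Gamma_{n,h}^y$ statements then follow verbatim.

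For a fixed $x$-edge case I would expand $u^{(r)}$ by Taylor's theorem about the boundary point $(\alpha_m,\ya_j)$ (as in \eqref{u:original}, to the order dictated by the total-order-nine smoothness of $\bu$), so that after dividing the stencil by $h^3$ the truncation is $\bo(h^6)$. The stencil nodes have $x$-offsets $\sigma^x_m h,2\sigma^x_m h,3\sigma^x_m h$ (for $k=0,1,2$) and $y$-offsets $\ell h$ (for $\ell=-2,\dots,2$), so substitution yields $\tfrac1{h^3}\mathcal{L}^{x,r}_h(u^{(r)})_{i,j}$ as a finite sum $\sum_{a,b} c^{(r)}_{a,b}\,h^{a+b-3}\,\partial_x^a\partial_y^b u^{(r)}(\alpha_m,\ya_j)$ with explicit rational coefficients $c^{(r)}_{a,b}$ fixed by \eqref{Ckl:1:bd}--\eqref{Ckl:2:bd}. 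These coefficients are designed so that the $\bo(h^0)$ term is precisely the third-order boundary operator, i.e.\ $u^{(1)}_{xyy}$ for $r=1$ and $u^{(2)}_{xxx}+2u^{(2)}_{xyy}$ for $r=2$, which by \eqref{eqn:FourthOrderPDE} equal $\varphi^{(1)}_1$ and $\varphi^{(2)}_1$ on the edge.

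It then remains to reconcile the $h^1,\dots,h^5$ correction terms with those in \eqref{rhos}. Two devices accomplish this. First, differentiating the boundary identities $u^{(1)}_{xyy}=\varphi^{(1)}_1$ and $u^{(2)}_{xxx}+2u^{(2)}_{xyy}=\varphi^{(2)}_1$ tangentially in $y$ replaces the purely tangential contributions by $\tfrac{d^k\varphi^{(r)}_1}{dy^k}$, producing the $\tfrac{d^2\varphi^{(r)}_1}{dy^2}$ and $\tfrac{d^4\varphi^{(r)}_1}{dy^4}$ terms. Second, the biharmonic relation $u^{(r)}_{xxxx}=-2u^{(r)}_{xxyy}-u^{(r)}_{yyyy}-\psi^{(r)}$ coming from $-\Delta^2 u^{(r)}=\psi^{(r)}$, together with its $x$- and $y$-derivatives, is applied repeatedly to trade every normal (i.e.\ $x$-heavy) derivative of order $\ge 4$ for tangential derivatives plus derivatives of $\psi^{(r)}$; this generates the $\tfrac{\partial^2\psi^{(r)}}{\partial y^2}$, $\tfrac{\partial^3\psi^{(r)}}{\partial x\partial y^2}$, $\tfrac{\partial^4\psi^{(r)}}{\partial x^2\partial y^2}$ and related terms of \eqref{rhos}. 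The leading odd-order term $-\tfrac{3h}{2}\psi^{(2)}$ in $\varphi^{(2)}_{1,h}$ reflects that the $r=2$ boundary operator is itself third order in $x$, so the one-sided stencil couples immediately to $\Delta^2 u^{(2)}$. Matching coefficient by coefficient through order $h^5$ and absorbing the higher-order Taylor remainder then gives the claimed $\bo(h^6)$ bound, with $\varphi^{(r)}_{1,\sigma^x_m h}$ evaluated at $(\alpha_m,\ya_j)$.

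The main obstacle is this reduction step. Because the stencil is one-sided in the normal direction, odd powers of $h$ survive and the expansion contains many mixed derivatives $\partial_x^a\partial_y^b u^{(r)}$ with large $a$ that are not individually tangential. Verifying that every such non-canonical derivative can be eliminated — that is, that after applying the boundary identities and the successive consequences of $-\Delta^2 u^{(r)}=\psi^{(r)}$, only the precise combination of $\varphi^{(r)}_1$- and $\psi^{(r)}$-derivatives recorded in \eqref{rhos} remains — is the delicate bookkeeping on which the lemma rests. The coefficients in \eqref{Ckl:1:bd}--\eqref{Ckl:2:bd} and \eqref{rhos} are engineered exactly so that this cancellation succeeds, so the verification is mechanical but lengthy, and I would carry out the final matching with symbolic computation rather than by hand.
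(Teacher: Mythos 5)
Your overall strategy is the same as the paper's: Taylor-expand $u^{(r)}$ about the boundary point $(\alpha_m,\ya_j)$, substitute into the stencil, and use the reformulated PDEs of \cref{thm:all:pde:2D} to rewrite the surviving derivative combinations as the terms in \eqref{rhos}; your $x\leftrightarrow y$ symmetry reduction is also a clean way of making precise the paper's remark that the remaining cases follow by ``similar techniques.'' However, there is a genuine gap in your list of devices: you never invoke the Dirichlet condition $u^{(r)}=g^{(r)}={0}$ on $\Gamma_m^x$ (recall the paper assumes ${\bm g}={\bm 0}$ throughout Section 3), and without it the argument fails at the very first step. The stencil applied to a generic smooth function does \emph{not} have the third-order boundary operator as its leading term: expanding with the coefficients \eqref{Ckl:1:bd} gives
\begin{equation*}
\mathcal{L}^{x,1}_h(u^{(1)})_{i,j}=\tfrac{11h^2}{6}\,\tfrac{\partial^{2}u^{(1)}}{\partial y^2}+h^3\,\tfrac{\partial^{3}u^{(1)}}{\partial x\partial y^2}-\tfrac{25h^4}{24}\,\tfrac{\partial^{4}u^{(1)}}{\partial y^4}+\cdots,
\end{equation*}
so after dividing by $h^3$ there is an $\bo(h^{-1})$ contribution proportional to $u^{(1)}_{yy}$, together with further pure tangential terms ($\partial^4 u^{(1)}/\partial y^4$, $\partial^6 u^{(1)}/\partial y^6$, \dots) at orders $h^1,h^3,\dots$. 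Neither of your two devices touches these: tangential differentiation of $u^{(1)}_{xyy}=\varphi^{(1)}_1$ only produces derivatives carrying at least one $x$-derivative, and the biharmonic identity trades $x$-heavy derivatives for tangential ones plus $\psi$-terms --- it cannot remove purely tangential ones. These terms disappear only because every pure $y$-derivative of $u^{(1)}$ vanishes identically along the edge by the Dirichlet condition; this is exactly the identity $\partial^n u^{(1)}/\partial y^n=0$, $n\ge 0$, that the paper's proof states and uses repeatedly (it is also needed inside the eliminations \eqref{eqn:T1}--\eqref{eqn:T3}, e.g.\ to drop $\partial^6 u^{(1)}/\partial y^6$ when converting $\tfrac14 u^{(1)}_{xxxxyy}+\tfrac12 u^{(1)}_{xxyyyy}$ into $-\tfrac14\partial_y^2\psi^{(1)}$).

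Once this third ingredient is added, your plan coincides with the paper's proof: the Dirichlet condition kills the pure tangential derivatives, the tangentially differentiated boundary identity converts $\partial_x\partial_y^n u^{(1)}$ into $d^{n-2}\varphi^{(1)}_1/dy^{n-2}$ for $n\ge 2$, and the biharmonic equation (again combined with the vanishing of pure tangential derivatives) yields the $\psi^{(1)}$-terms of \eqref{rhos}, with remainder $\bo(h^9)$ before division by $h^3$. So the gap is a missing idea rather than a wrong route; but as written, your coefficient-by-coefficient matching could not be completed, and the claim that the stencil coefficients alone place the boundary operator in the leading slot is false for general smooth fields.
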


\begin{proof}
We only prove the first assertion with $r=1$, as the other cases are derived using similar techniques.
Fix $m\in \{1,2,\ldots,N_x\}$ and without loss of generality, assume $\sigma_m^x = 1$.
	Using the Taylor approximation at the base point $(\alpha_m,\ya_j)$, we have
	\be \label{u:original:bd}
	\bu(\alpha_m+x,\ya_j+y)
	=
	\sum_{(p,n)\in \ind_{M+1}}
	 \frac{\partial^{p+n}\bu(\alpha_m,\ya_j) }{\partial x^p \partial y^n}\frac{x^py^n}{p!n!}+\bo(h^{M+2}),
	\ee
	where  $x\in \{h, 2h,3h\}$ and $y\in \{0,\pm h, \pm 2h\}$ and $\ind_{M+1}$ is defined by \eqref{ind}.
Similar to the previous proof, we omit the subscripts for readability
and  use the following notations:
\begin{align*}
		\frac{\partial^{p+n}\bu }{\partial x^m \partial y^n}:=\frac{\partial^{p+n}\bu(\alpha_m,\ya_j) }{\partial x^p \partial y^n},\qquad  \frac{\partial^{p+n} {\bm \psi} }{\partial x^p \partial y^n}:=\frac{\partial^{p+n} {\bm \psi}(\alpha_m,\ya_j) }{\partial x^p \partial y^n}, \qquad  \frac{d^n \varphi^{(1)} }{dy^n}:=\frac{d^n \varphi^{(1)}(\alpha_m,\ya_j) }{dy^n}.
\end{align*}

By \eqref{Ckl:1:bd}, \eqref{scheme:u1:side:1},   and \eqref{u:original:bd} with $r=1$ and $M=7$,
\begin{align*}
\mathcal{L}_h^{x,1}(u^{(1)})_{i,j}
		&=   \tfrac{11h^2}{6} \tfrac{\partial^{2}u^{(1)} }{\partial y^2}+h^3\tfrac{\partial^{3}u^{(1)} }{\partial x\partial y^2}
		-\tfrac{25h^4}{24} \tfrac{\partial^{4}u^{(1)} }{\partial y^4}-\tfrac{7 h^5}{20} \tfrac{\partial^{5}u^{(1)} }{ \partial x \partial y^4}
		 +h^6\Big(\tfrac1{4}\tfrac{\partial^{6}u^{(1)} }{\partial x^4\partial y^2}\notag \\
		 &\qquad+\tfrac{1}{2}\tfrac{\partial^{6}u^{(1)} }{\partial x^2\partial y^4}-\tfrac{419}{2160}\tfrac{\partial^{6}u^{(1)} }{\partial y^6}\Big)
		 +h^7\Big(\tfrac{3}{10}\tfrac{\partial^{7}u^{(1)} }{\partial x^5\partial y^2}+\tfrac3{5}\tfrac{\partial^{7}u^{(1)} }{\partial x^3\partial y^4}-\tfrac{5}{72}\tfrac{\partial^{7}u^{(1)} }{ \partial x \partial y^6}\Big) \\
		 &\qquad+h^8\Big(\tfrac{5}{24}\tfrac{\partial^{8}u^{(1)} }{\partial x^6\partial y^2}+\tfrac{17}{48}\tfrac{\partial^{8}u^{(1)} }{\partial x^4\partial y^4}
		 +\tfrac{1}{12}\tfrac{\partial^{8}u^{(1)} }{\partial x^2\partial y^6}-\tfrac{359}{24192}\tfrac{\partial^{8}u^{(1)} }{ \partial y^8}\Big)+\bo(h^{9}).\notag
\end{align*}	
By \eqref{eqn:FourthOrderPDE},
\begin{equation*}
\tfrac{\partial^n u^{(1)}}{\partial y^n}=0, \qquad n\ge0;\qquad
\tfrac{\partial^{n+1} u^{(1)}}{\partial x\partial y^n}=\tfrac{d^{n-2} \varphi^{(1)}_1}{d y^{n-2}}, \qquad n\ge 2.
\end{equation*}
Consequently, there holds	
\begin{align}	 \label{bd:taylor:plug}
\mathcal{L}_h^{x,1}(u^{(1)})_{i,j}
&=  h^3\varphi^{(1)}_1
- \tfrac{7h^5}{20}\tfrac{d^{2}\varphi^{(1)}_1 }{d y^2}+h^6\Big(\tfrac{1}{4}\tfrac{\partial^{6}u^{(1)} }{\partial x^4\partial y^2}
+\tfrac{1}{2} \tfrac{\partial^{6}u^{(1)} }{\partial x^2\partial y^4}\Big) \notag \\
&\quad +h^7\Big(\tfrac{3}{10}\tfrac{\partial^{7}u^{(1)} }{\partial x^5\partial y^2}+\tfrac{3}{5}\tfrac{\partial^{7}u^{(1)} }{\partial x^3\partial y^4}-\tfrac{5}{72}\tfrac{d^{4}\varphi^{(1)}_1 }{d y^4}\Big) \\
&\qquad +h^8\Big(\tfrac{5}{24}\tfrac{\partial^{8}u^{(1)} }{\partial x^6\partial y^2}+\tfrac{17}{48}\tfrac{\partial^{8}u^{(1)} }{\partial x^4\partial y^4}+\tfrac{1}{12}\tfrac{\partial^{8}u^{(1)} }{\partial x^2\partial y^6}\Big)+\bo(h^{9}).\notag
	\end{align}

We then write
\begin{equation}\label{eqn:T1}
\begin{split}
\Big(\tfrac{1}{4}\tfrac{\partial^{6}u^{(1)} }{\partial x^4\partial y^2}
		+\tfrac{1}{2} \tfrac{\partial^{6}u^{(1)} }{\partial x^2\partial y^4}\Big)
		& = \tfrac1{4} \tfrac{\p^2}{\p y^2} \Delta^2 u^{(1)} -\tfrac{1}{4}\tfrac{\p^6 u^{(1)}}{\p y^6} =  -\tfrac1{4} \tfrac{\p^2}{\p y^2}  \psi^{(1)},
\end{split}
\end{equation}
and
\begin{equation}\label{eqn:T2}
\begin{split}
\Big(\tfrac{3}{10}\tfrac{\partial^{7}u^{(1)} }{\partial x^5\partial y^2}+\tfrac{3}{5}\tfrac{\partial^{7}u^{(1)} }{\partial x^3\partial y^4}-\tfrac{5}{72}\tfrac{d^{4}\varphi^{(1)}_1 }{ d y^4}\Big)
& = \tfrac{3}{10} \tfrac{\p^3}{\p x \p y^2} \Delta^2 u^{(1)} - \tfrac3{10} \tfrac{\p^7 u^{(1)}}{\p x \p y^6} -\tfrac{5}{72}\tfrac{d^{4}\varphi^{(1)}_1 }{ d y^4}\\
& = -\tfrac{3}{10} \tfrac{\p^3}{\p x \p y^2} \psi^{(1)}-\tfrac{133}{360} \tfrac{d^{4}\varphi^{(1)}_1 }{ d y^4}.
\end{split}
\end{equation}

Moreover, a simple calculation shows (recalling $\Delta^2 \bu =-\bpsi$)
\begin{align*}
\tfrac{-5}{24} \tfrac{\p^4 \psi^{(1)}}{\p x^2 \p y^2} + \tfrac1{16} \tfrac{\p^4\psi^{(1)}}{\p y^4} &= \tfrac{5}{24}\tfrac{\p^4}{\p x^2 \p y^2} \Delta^2 u^{(1)}-\tfrac1{16} \tfrac{\p^4}{ \p y^4} \Delta^2 u^{(1)}   \\
& = \tfrac5{24} \tfrac{\p^8 u^{(1)}}{\p x^6 \p y^2} + \tfrac{17}{48}\tfrac{\p^8 u^{(1)}}{\p x^4 \p^4 y} + \tfrac1{12} \tfrac{\p^8 u^{(1)}}{\p x^2\p y^6}- \tfrac1{16} \tfrac{\p^8 u^{(1)}}{\p y^8}.
\end{align*}
Consequently,
\begin{equation}
\label{eqn:T3}
\begin{split}
\tfrac{5}{24}\tfrac{\partial^{8}u^{(1)} }{\partial x^6\partial y^2}+\tfrac{17}{48}\tfrac{\partial^{8}u^{(1)} }{\partial x^4\partial y^4}+\tfrac{1}{12}\tfrac{\partial^{8}u^{(1)} }{\partial x^2\partial y^6} = \tfrac{-5}{24} \tfrac{\p^4 \psi^{(1)}}{\p x^2 \p y^2} + \tfrac1{16} \tfrac{\p^4\psi^{(1)}}{\p y^4}.
\end{split}
\end{equation}

Plugging in the identities \eqref{eqn:T1}--\eqref{eqn:T3} into \eqref{bd:taylor:plug} yields
		\[
		\begin{split}
\mathcal{L}_h^{x,1}(u^{(1)})_{i,j}
		& =  h^3\varphi^{(1)}_1
		-\tfrac{7 h^5}{20}\tfrac{d^{2}\varphi^{(1)}_1 }{d y^2}-\tfrac{h^6}{4} \tfrac{\p^2 \psi^{(1)}}{\p y^2}-h^7\Big( \tfrac{3}{10} \tfrac{\p^3\psi^{(1)}}{\p x \p y^2}	 +\tfrac{133}{360} \tfrac{d^{4}\varphi^{(1)}_1 }{ d y^4}\Big)\\
		& +h^8\Big(\tfrac{-5}{24} \tfrac{\p^4 \psi^{(1)}}{\p x^2 \p y^2} + \tfrac1{16} \tfrac{\p^4\psi^{(1)}}{\p y^4}\Big)+\bo(h^{9})\\
	& = h^3 \varphi_{1,h}^{(1)} + \bo(h^9),
	\end{split}
	\]
	where we used \eqref{rhos} in the last equality.  We then divide by $h^3$ to obtain
	the desired result.
\end{proof}
	\subsection{A sixth-order  FD operator for smooth $\bu$  at corner points}\label{FDMs:corners}
%
%
In this section, we propose a 2-point FD operator with the sixth-order of consistency in a neighborhood
of the set of corner points in $\Omega^c$ of \eqref{Omega:c} (cf.~blue points in Figure \ref{FDM:corner:special}).
%
%
To  describe the  sixth-order 2-point FD operator, we define
	 \begin{align}\label{right:corner}		 
	& \varphi^{(1)}_{{2,\rm left,bot},h}	 := \varphi^{(1)}_2  +h(\tfrac{d \varphi^{(1)}_2}{dx} -\tfrac{d \varphi^{(1)}_1}{dy}    -\tfrac{1}{2}\psi^{(1)}) \notag \\
	&\qquad \hspace{1.4cm} +h^2(\tfrac{2}{3}\tfrac{d^2 \varphi^{(1)}_2}{dx^2} -\tfrac{1}{2} \tfrac{d^2 \varphi^{(1)}_1}{dy^2}  -\tfrac{1}{2}\tfrac{\partial \psi^{(1)} }{\partial x}  +\tfrac{1}{6}\tfrac{\partial \psi^{(1)} }{\partial y} ) \notag\\
	&\qquad  \hspace{1.4cm} +\tfrac{h^3}{9 }(2\tfrac{d^3 \varphi^{(1)}_2}{dx^3}    -\tfrac{1}{2}\tfrac{d^3 \varphi^{(1)}_1}{dy^3}    -\tfrac{13}{4}\tfrac{\partial^2 \psi^{(1)} }{\partial x^2}-\tfrac{1}{2}\tfrac{\partial^2 \psi^{(1)} }{\partial x \partial y} +\tfrac{5}{4}\tfrac{\partial^2 \psi^{(1)} }{\partial y^2}) \notag\\
	&\qquad  \hspace{1.4cm} +h^4(\tfrac{13}{360}\tfrac{d^4 \varphi^{(1)}_2}{dx^4} +\tfrac{1}{36}\tfrac{d^4 \varphi^{(1)}_1}{dy^4}   -\tfrac{5}{36}\tfrac{\partial^3 \psi^{(1)} }{\partial x^3}-\tfrac{1}{10}\tfrac{\partial^3 \psi^{(1)} }{\partial x^2 \partial y}+\tfrac{1}{36}\tfrac{\partial^3 \psi^{(1)} }{\partial x \partial y^2}+ \tfrac{1}{24}\tfrac{\partial^3 \psi^{(1)} }{\partial y^3 }  )   \notag \\
	&\qquad  \hspace{1.4cm} + \tfrac{h^5}{60}(\tfrac{d^5 \varphi^{(1)}_1}{dy^5}   - \tfrac{59}{24}\tfrac{\partial^4 \psi^{(1)} }{\partial x^4} - 3\tfrac{\partial^4 \psi^{(1)} }{\partial x^3 \partial y} - \tfrac{1}{4}\tfrac{\partial^4 \psi^{(1)} }{\partial x^2 \partial y^2} + \tfrac{\partial^4 \psi^{(1)} }{\partial x \partial y^3} +\tfrac{1}{24}\tfrac{\partial^4 \psi^{(1)} }{\partial y^4}), \notag \\
	&  \varphi^{(2)}_{{2,\rm left,bot},h}	 :=	  \varphi^{(2)}_2    +h(\tfrac{d \varphi^{(2)}_2}{dx} -\tfrac{1}{4}\psi^{(2)}) \notag \\
	&\qquad  \hspace{1.4cm}  + h^2(\tfrac{1}{2} \tfrac{d^2 \varphi^{(2)}_2}{dx^2} -\tfrac{1}{6}\tfrac{d^2 \varphi^{(2)}_1}{dy^2}    -\tfrac{1}{3}\tfrac{\partial \psi^{(2)} }{\partial x} -\tfrac{1}{12}\tfrac{\partial \psi^{(2)} }{\partial y}) \notag \\
	& \qquad  \hspace{1.4cm} + \tfrac{h^3}{36 }(5\tfrac{d^3 \varphi^{(2)}_2}{dx^3} -2 \tfrac{d^3 \varphi^{(2)}_1}{dy^3}  -\tfrac{13}{2}\tfrac{\partial^2 \psi^{(2)} }{\partial x^2}-4\tfrac{\partial^2 \psi^{(2)} }{\partial x \partial y} +\tfrac{5}{2}\tfrac{\partial^2 \psi^{(2)} }{\partial y^2}) \notag \\
	& \qquad  \hspace{1.4cm} + \tfrac{h^4}{12} (\tfrac{13}{45} \tfrac{d^4 \varphi^{(2)}_2}{dx^4} +\tfrac{1}{8} \tfrac{d^4 \varphi^{(2)}_1}{dy^4} -\tfrac{7}{8}\tfrac{\partial^3 \psi^{(2)} }{\partial x^3 }-\tfrac{67}{90}\tfrac{\partial^3 \psi^{(2)} }{\partial x^2 \partial y} +\tfrac{1}{4} \tfrac{\partial^3 \psi^{(2)} }{\partial x \partial y^2}    +\tfrac{29}{90}\tfrac{\partial^3 \psi^{(2)} }{\partial y^3}) \notag \\
	& \qquad  \hspace{1.4cm} -\tfrac{h^5}{80}(    \tfrac{1}{6}\tfrac{d^5 \varphi^{(2)}_2}{dx^5}  - \tfrac{1}{2}\tfrac{d^5 \varphi^{(2)}_1}{dy^5}   +\tfrac{59}{36}\tfrac{\partial^4 \psi^{(2)} }{\partial x^4}  +\tfrac{13}{6}\tfrac{\partial^4 \psi^{(2)} }{\partial x^3 \partial y}  +\tfrac{1}{6}\tfrac{\partial^4 \psi^{(2)} }{\partial x^2 \partial y^2} -\tfrac{\partial^4 \psi^{(2)} }{\partial x \partial y^3}  - \tfrac{1}{36} \tfrac{\partial^4 \psi^{(2)} }{\partial y^4} ), \notag \\
	&	\varphi^{(1)}_{{2,\rm left,up},h}	 := \varphi^{(1)}_2  +h(\tfrac{d \varphi^{(1)}_2}{dx} -\tfrac{d \varphi^{(1)}_1}{dy}    +\tfrac{1}{2}\psi^{(1)})  \\
	&\qquad  \hspace{1.4cm} +h^2(\tfrac{2}{3}\tfrac{d^2 \varphi^{(1)}_2}{dx^2} +\tfrac{1}{2} \tfrac{d^2 \varphi^{(1)}_1}{dy^2}  +\tfrac{1}{2}\tfrac{\partial \psi^{(1)} }{\partial x}  +\tfrac{1}{6}\tfrac{\partial \psi^{(1)} }{\partial y} ) \notag \\
	&\qquad  \hspace{1.4cm} +\tfrac{h^3}{9 }(2\tfrac{d^3 \varphi^{(1)}_2}{dx^3}    -\tfrac{1}{2}\tfrac{d^3 \varphi^{(1)}_1}{dy^3}   +\tfrac{13}{4}\tfrac{\partial^2 \psi^{(1)} }{\partial x^2} -\tfrac{1}{2}\tfrac{\partial^2 \psi^{(1)} }{\partial x \partial y} -\tfrac{5}{4}\tfrac{\partial^2 \psi^{(1)} }{\partial y^2}) \notag \\
	&\qquad  \hspace{1.4cm}  +h^4(\tfrac{13}{360}\tfrac{d^4 \varphi^{(1)}_2}{dx^4} -\tfrac{1}{36}\tfrac{d^4 \varphi^{(1)}_1}{dy^4}  +\tfrac{5}{36}\tfrac{\partial^3 \psi^{(1)} }{\partial x^3}-\tfrac{1}{10}\tfrac{\partial^3 \psi^{(1)} }{\partial x^2 \partial y}-\tfrac{1}{36}\tfrac{\partial^3 \psi^{(1)} }{\partial x \partial y^2}+ \tfrac{1}{24}\tfrac{\partial^3 \psi^{(1)} }{\partial y^3 })     \notag \\
	&\qquad \hspace{1.4cm} + \tfrac{h^5}{60}(\tfrac{d^5 \varphi^{(1)}_1}{dy^5}   + \tfrac{59}{24}\tfrac{\partial^4 \psi^{(1)} }{\partial x^4} - 3\tfrac{\partial^4 \psi^{(1)} }{\partial x^3 \partial y} + \tfrac{1}{4}\tfrac{\partial^4 \psi^{(1)} }{\partial x^2 \partial y^2} + \tfrac{\partial^4 \psi^{(1)} }{\partial x \partial y^3} -\tfrac{1}{24}\tfrac{\partial^4 \psi^{(1)} }{\partial y^4}), \notag \\
	&	\varphi^{(2)}_{{2,\rm left,up},h}	 :=	  \varphi^{(2)}_2   +h(\tfrac{d \varphi^{(2)}_2}{dx} +\tfrac{1}{4}\psi^{(2)}) \notag \\
	&\qquad  \hspace{1.4cm} + h^2(\tfrac{1}{2} \tfrac{d^2 \varphi^{(2)}_2}{dx^2} +\tfrac{1}{6}\tfrac{d^2 \varphi^{(2)}_1}{dy^2}  +\tfrac{1}{3}\tfrac{\partial \psi^{(2)} }{\partial x}   -\tfrac{1}{12}\tfrac{\partial \psi^{(2)} }{\partial y}) \notag \\
	& \qquad  \hspace{1.4cm} + \tfrac{h^3}{36 }(5\tfrac{d^3 \varphi^{(2)}_2}{dx^3} -2 \tfrac{d^3 \varphi^{(2)}_1}{dy^3} +\tfrac{13}{2}\tfrac{\partial^2 \psi^{(2)} }{\partial x^2}-4\tfrac{\partial^2 \psi^{(2)} }{\partial x \partial y} -\tfrac{5}{2}\tfrac{\partial^2 \psi^{(2)} }{\partial y^2} ) \notag \\
	& \qquad  \hspace{1.4cm} + \tfrac{h^4}{12} (\tfrac{13}{45} \tfrac{d^4 \varphi^{(2)}_2}{dx^4} -\tfrac{1}{8} \tfrac{d^4 \varphi^{(2)}_1}{dy^4} +\tfrac{7}{8}\tfrac{\partial^3 \psi^{(2)} }{\partial x^3 } -\tfrac{67}{90}\tfrac{\partial^3 \psi^{(2)} }{\partial x^2 \partial y}-\tfrac{1}{4} \tfrac{\partial^3 \psi^{(2)} }{\partial x \partial y^2}    +\tfrac{29}{90}\tfrac{\partial^3 \psi^{(2)} }{\partial y^3}) \notag \\
	& \qquad  \hspace{1.4cm} -\tfrac{h^5}{80}(   \tfrac{1}{6}\tfrac{d^5 \varphi^{(2)}_2}{dx^5}  - \tfrac{1}{2}\tfrac{d^5 \varphi^{(2)}_1}{dy^5}    -\tfrac{59}{36}\tfrac{\partial^4 \psi^{(2)} }{\partial x^4} +\tfrac{13}{6}\tfrac{\partial^4 \psi^{(2)} }{\partial x^3 \partial y}   -\tfrac{1}{6}\tfrac{\partial^4 \psi^{(2)} }{\partial x^2 \partial y^2} -\tfrac{\partial^4 \psi^{(2)} }{\partial x \partial y^3}+ \tfrac{1}{36} \tfrac{\partial^4 \psi^{(2)} }{\partial y^4}  ), \notag
\end{align}
where $\bpsi$ and $\varphi^{(r)}_j$ are defined in \eqref{RHS:2d}.
%
%
%
%
%
We provide 2-point FD operators at $(\xa_i\pm h,\ya_j\pm h),(\xa_i\pm h,\ya_j\mp h)\in \Omega^c$  (see blue points in Figure \ref{FDM:corner:special}) in the following lemma.
\begin{lemma}\label{theorem:corner:1}
	Define the following 2-point FD operators (r=1,2)
	\begin{align}	
		\mathcal{L}^{{\rm left,bot},r}_h (u_h^{(r)})_{i,j} &:= \lambda_r (u_h^{(r)})_{i,j}-\tfrac{\lambda_r}{2}(u_h^{(r)})_{i+1,j},\\
	\mathcal{L}^{{\rm left,up},r}_h (u_h^{(r)})_{i,j} &:= \tfrac{\lambda_r}{2}(u_h^{(r)})_{i+1,j}-\lambda_r (u_h^{(r)})_{i,j},\\
	\mathcal{L}^{{\rm right,bot},r}_h (u_h^{(r)})_{i,j} 	 &:= \lambda_r (u_h^{(r)})_{i,j}-\tfrac{\lambda_r}{2}(u_h^{(r)})_{i-1,j},\\
\mathcal{L}^{{\rm right,up},r}_h (u_h^{(r)})_{i,j} &:= \tfrac{\lambda_r}{2}(u_h^{(r)})_{i-1,j}-\lambda_r (u_h^{(r)})_{i,j},
	\end{align}
	where $\lambda_r=-4$ if $r=1$ and $\lambda_r=-2$ if $r=2$.

If $\bu$ is smooth,	then for $r\in \{1,2\}$, there holds
	\begin{itemize}
		\item[(i)] 	$\left| \frac1{h^3} \mathcal{L}^{{\rm left, bot},r}_h (u^{(r)})_{i,j}-\varphi_{2,{\rm left, bot},h}^{(r)}\right| = \bo(h^6)$
		at $({\textup\xa}_i,{\textup\ya}_j)$ with $ ({\textup\xa}_i-h,{\textup\ya}_j-h)\in \Omega^c$, where $\varphi_{2,{\rm left, bot},h}^{(r)}$ is evaluated at $ ({\textup\xa}_i-h,{\textup\ya}_j-h)$;
		\item[(ii)]  $\left| \frac1{h^3} \mathcal{L}^{{\rm left, up},r}_h (u^{(r)})_{i,j}-\varphi_{2,{\rm left, up},h}^{(r)}\right| = \bo(h^6)$ at
		 $({\textup\xa}_i,{\textup\ya}_j)$ with $ ({\textup\xa}_i-h,{\textup\ya}_j+h)\in \Omega^c$, where $\varphi_{2,{\rm left, up},h}^{(r)}$ is evaluated at $ ({\textup\xa}_i-h,{\textup\ya}_j+h)$;
		\item[(iii)]  $\left| \frac1{h^3} \mathcal{L}^{{\rm right, bot},r}_h (u^{(r)})_{i,j}-\varphi_{2,{\rm left, up},-h}^{(r)}\right| = \bo(h^6)$
		at $({\textup\xa}_i,{\textup\ya}_j)$ with $ ({\textup\xa}_i+h,{\textup\ya}_j-h)\in \Omega^c$, where $\varphi_{2,{\rm left, up},-h}^{(r)}$ is evaluated at $ ({\textup\xa}_i+h,{\textup\ya}_j-h)$;
		\item[(iv)]  $\left| \frac1{h^3} \mathcal{L}^{{\rm right, up},r}_h (u^{(r)})_{i,j}-\varphi_{2,{\rm left, bot},-h}^{(r)}\right| = \bo(h^6)$
		at $({\textup\xa}_i,{\textup\ya}_j)$ with $ ({\textup\xa}_i+h,{\textup\ya}_j+h)\in \Omega^c$,
		where $\varphi_{2,{\rm left, bot},-h}^{(r)}$ is evaluated at $ ({\textup\xa}_i+h,{\textup\ya}_j+h)$.
	\end{itemize}
	\end{lemma}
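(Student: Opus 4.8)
The plan is to follow the template of Lemmas~\ref{theorem:interior} and~\ref{theorem:side:1:generalized}: Taylor-expand $u^{(r)}$ about the corner and then eliminate every surviving derivative using the reformulation \eqref{eqn:FourthOrderPDE}. I would carry out case~(i) in detail, taking the corner to be $(\alpha_m,\beta_n)=(\xa_i-h,\ya_j-h)$; cases (ii)--(iv) then follow by reflecting the configuration across the vertical and/or horizontal line through the corner. First I would expand $u^{(r)}$ at the two stencil nodes, which relative to the corner are $(\alpha_m+h,\beta_n+h)$ and $(\alpha_m+2h,\beta_n+h)$, to total order eight as in \eqref{u:original}. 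Forming the combination $\lambda_r[\,u^{(r)}(\alpha_m+h,\beta_n+h)-\tfrac12 u^{(r)}(\alpha_m+2h,\beta_n+h)\,]$ multiplies the Taylor coefficient of $\tfrac{\p^{p+n}u^{(r)}}{\p x^p\p y^n}$ by the factor $(1-2^{p-1})$, which annihilates every contribution with $p\in\{0,1\}$.

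The decisive simplification comes from the no-slip condition: since $u^{(r)}\equiv 0$ along both edges $\{x=\alpha_m\}$ and $\{y=\beta_n\}$ meeting at the corner, every pure derivative $\tfrac{\p^p u^{(r)}}{\p x^p}$ and $\tfrac{\p^n u^{(r)}}{\p y^n}$ vanishes there. Combined with the factor $(1-2^{p-1})$, only the mixed derivatives with $p\ge 2$ and $n\ge 1$ survive, the lowest being $\tfrac{\p^3 u^{(r)}}{\p x^2\p y}$, which yields the leading $h^3$ term.

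The core of the argument---and the step I expect to be the main obstacle---is to express each surviving mixed derivative $\tfrac{\p^{p+n}u^{(r)}}{\p x^p\p y^n}$ (with $p\ge 2$, $n\ge 1$, $p+n\le 8$) entirely through $\psi^{(r)}$, $\varphi_1^{(r)}$, $\varphi_2^{(r)}$ and their tangential derivatives at the corner. For this I would combine three families of relations, each holding along an edge and therefore differentiable in the tangential variable: (a) the third-order boundary condition on $\Gamma_m^x$ differentiated repeatedly in $y$; (b) the third-order boundary condition on $\Gamma_n^y$ differentiated repeatedly in $x$; and (c) the biharmonic identity $\tfrac{\p^4 u^{(r)}}{\p x^4}+2\tfrac{\p^4 u^{(r)}}{\p x^2\p y^2}+\tfrac{\p^4 u^{(r)}}{\p y^4}=-\psi^{(r)}$ together with its derivatives. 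After discarding the vanishing pure derivatives, (a)--(c) must be resolved into a consistent, essentially triangular determination of all mixed derivatives up to order eight; verifying that this system closes---i.e.\ that the two edges' data are compatible at the corner---is the delicate bookkeeping that underlies admissibility of a two-point stencil. As a check at leading order, the surviving term is $-\tfrac{\lambda_r}{2}h^3\tfrac{\p^3 u^{(r)}}{\p x^2\p y}$, and using (b) on $\Gamma_n^y$ with the vanishing of $\tfrac{\p^3 u^{(1)}}{\p y^3}$ yields $\tfrac{\p^3 u^{(1)}}{\p x^2\p y}=\tfrac12\varphi_2^{(1)}$ and $\tfrac{\p^3 u^{(2)}}{\p x^2\p y}=\varphi_2^{(2)}$; with $\lambda_1=-4$ and $\lambda_2=-2$ this reproduces the constant term $\varphi_2^{(r)}$ of $\varphi^{(r)}_{2,\mathrm{left,bot},h}$.

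Finally I would substitute the resolved derivatives into the truncated combination, collect powers of $h$, and verify term-by-term that the sum equals $h^3\varphi^{(r)}_{2,\mathrm{left,bot},h}+\bo(h^9)$, with $\varphi^{(r)}_{2,\mathrm{left,bot},h}$ given in \eqref{right:corner}; dividing by $h^3$ yields~(i). Cases (ii)--(iv) are obtained by the substitutions $h\mapsto -h$ in the $x$- and/or $y$-coordinate, which reflect the stencil about the corner; these reflections flip the signs of the odd-order contributions and interchange the two boundary edges, thereby accounting for the stated sign patterns and the appearance of $\varphi^{(r)}_{2,\mathrm{left,up},\pm h}$ and $\varphi^{(r)}_{2,\mathrm{left,bot},\pm h}$ in (ii)--(iv).
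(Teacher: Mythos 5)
Your proposal is correct and takes essentially the same route as the paper: the paper omits this proof as ``similar to the proof of \cref{theorem:side:1:generalized}'', and that proof is precisely your scheme---Taylor expansion at the corner base point, elimination of the surviving derivatives via the no-slip condition, the tangentially differentiated third-order boundary PDEs, and the biharmonic identity (whose resulting linear systems are indeed uniquely solvable order by order), followed by matching against \eqref{right:corner}. The only slip is cosmetic: the stencil factor $(1-2^{p-1})$ annihilates only the $p=1$ terms (for $p=0$ it equals $\tfrac12$), but the $p=0$ contributions are pure $y$-derivatives killed by no-slip on the vertical edge, as your own next paragraph supplies, so your final accounting that only terms with $p\ge 2$, $n\ge 1$ survive is correct.
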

\begin{proof}
The proof is similar to the proof of \cref{theorem:side:1:generalized} and is therefore omitted.
\end{proof}
	\subsection{A 1-point  FD operator for smooth $\bu$  at special grid points}\label{FDMs:special:point}
If $\Omega$ is a concave domain (see Figure \ref{FDM:corner:special} for an example), it is straightforward to obtain an arbitrary high-order 1-point FD operator for some special grid points in a neighborhood
of a corner by using the Dirichlet boundary condition and  the Taylor approximations:
\begin{equation}
\label{eqn:SimpleTaylor}
	\begin{split}
 \bu(\xa_i\pm h,\ya_j)&=\sum_{k=0}^M\tfrac{\p^k {\bu (\xa_i,\ya_j)}}{k!\p x^k} (\pm h)^k+\bo(h^{M+1}), \\ 	
 \bu(\xa_i,\ya_j\pm h)&=\sum_{k=0}^M\tfrac{\p^k {\bu (\xa_i,\ya_j)}}{k!\p y^k} (\pm h)^k+\bo(h^{M+1}), \qquad \bu  = \bm 0, \qquad \text{on } \partial \Omega,
\end{split}
\end{equation}
where $M\in \NN$.
%
%

Suppose the grid point $(\xa_i,\ya_j)$ satisfies  $ ({\textup\xa}_i\pm h,{\textup\ya}_j)\in \Omega^c$ 
and $({\textup\xa}_i\pm 2h,{\textup\ya}_j)\in \Gamma_{n}^y$ or  the grid point $(\xa_i,\ya_j)$ satisfies
$ ({\textup\xa}_i,{\textup\ya}_j\pm h)\in \Omega^c$ 
and $({\textup\xa}_i,{\textup\ya}_j\pm 2h)\in \Gamma_{m}^x$
(see red points in Figure \ref{FDM:corner:special}). Then $\left| \bu_{i,j} \right|=\bo(h^{M+1})$ at $(\xa_i,\ya_j)$ for any $M\in \NN$ by \eqref{eqn:SimpleTaylor}.
Thus, we specify that $(\bu_h)_{i,j}=0$ at such grid points in the FD scheme.
\subsection{The FDM for data with less regularity}\label{FDMs:singular}
In \cref{FDMs:interior,FDMs:sides,FDMs:corners,FDMs:special:point}, we derived sixth-order FD operators assuming
the data and $\bu$ are smooth. In the case $\bpsi$, or $\varphi_j^{(r)}$ has less regularity, some terms in the 
right-hand side of our proposed FDM may be undefined.  In this case, we simply drop these 
terms to approximate $\bu$ with less smoothness.
Precisely, for any  $\xi\in \Big\{\tfrac{\p^{m+n} {\bm \psi}}{\p  x^m \p  y^n}, \tfrac{d^n\varphi^{(r)}_1}{dy^n}, \tfrac{d^m\varphi^{(r)}_2}{dx^m}\Big\}$, if $\xi$ is not defined at the corresponding base point in \eqref{bpsi:h}, \eqref{rhos}, \eqref{right:corner}, then we set this $\xi=0$. We test the case $\bu$ is singular in \cref{example3} to gauge the performance of the method.
	\subsection{The matrix form of the proposed FDM}\label{Matrix:FDMs} We constructed sixth-order FD operators with decoupling 
	properties at interior grid points (see the black point in $\Omega$ in Figure \ref{FDM:corner:special}) in  \cref{theorem:interior}, at boundary side points (see the green point in Figure \ref{FDM:corner:special}) in  \cref{theorem:side:1:generalized}, at boundary corner points (see the blue point in Figure \ref{FDM:corner:special}) in  \cref{theorem:corner:1}, and at special gird points (see the red point in Figure \ref{FDM:corner:special}) in  \cref{FDMs:special:point}. 
Now, it is straightforward to derive the following diagonal block matrix form of our proposed FDM to decouple $u_h^{(1)}$ and $u_h^{(2)}$  at all grid points in the following theorem.

\begin{theorem}\label{thm:matrix:form}
The matrix form of the proposed FDM  at all  grid points can be expressed as
\be\label{matrix:A1:A2}
\begin{bmatrix}
	A_1 & \bf{0} \\
	\bf{0} & A_2
\end{bmatrix}  \begin{bmatrix}
u_h^{(1)}  \\
 u_h^{(2)}
\end{bmatrix} =\begin{bmatrix}
b_1  \\
b_2
\end{bmatrix}, \quad i.e., \quad A_ru_h^{(r)}=b_r, \quad\text{with} \quad r=1,2,
\ee
where $A_1$ and $A_2$ are  two constant matrices,  $b_1$ and $b_2$ depend on ${\bm f}$ and $\phi$ of \eqref{Model:Original}, but $b_1$ and $b_2$ are independent of both the pressure $p$ and the constant kinematic viscosity $\nu$  of \eqref{Model:Original}.
\end{theorem}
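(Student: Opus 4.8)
The plan is to assemble the global linear system by walking through the decomposition of the grid points described in \cref{FDMs:all:points} and illustrated in \cref{FDM:corner:special}, and then to read off the three claimed properties directly from the structure of the local operators in \cref{theorem:interior,theorem:side:1:generalized,theorem:corner:1} together with \cref{FDMs:special:point}. First I would fix an ordering of the unknowns, listing all $(u_h^{(1)})_{i,j}$ over $\overline\Omega_h$ followed by all $(u_h^{(2)})_{i,j}$, and assign to each node exactly one equation according to its type: the $25$-point operator $\mathcal{L}_h$ on $\Omega_h$, the $15$-point operators $\mathcal{L}_h^{x,r},\mathcal{L}_h^{y,r}$ on $\Gamma_{m,h}^x,\Gamma_{n,h}^y$, the $2$-point corner operators of \cref{theorem:corner:1} at the blue nodes, the relation $(\bu_h)_{i,j}=0$ at the red nodes, and the Dirichlet constraint $(\bu_h)_{i,j}=\bm g_{i,j}$ on $\p\Omega$. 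The key observation for the block-diagonal structure is that \emph{every} one of these operators acts on a single velocity component: $\mathcal{L}_h$ is applied componentwise and never mixes $u_h^{(1)}$ with $u_h^{(2)}$, while $\mathcal{L}_h^{x,r}$, $\mathcal{L}_h^{y,r}$ and the corner and special operators each involve only $u_h^{(r)}$. Consequently, with the chosen ordering the coefficient matrix has no cross terms and splits as $\mathrm{diag}(A_1,A_2)$, giving $A_r u_h^{(r)}=b_r$.

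Next I would verify that $A_1$ and $A_2$ have constant entries. Every stencil weight appearing above --- the $C_{k,\ell}$ of \eqref{Ckl:interior}, the $C^{(r)}_{k,\ell}$ of \eqref{Ckl:1:bd}--\eqref{Ckl:2:bd}, the corner weights $\lambda_r\in\{-4,-2\}$, and the trivial weights at the red and boundary nodes --- is an explicit rational number independent of the node location, of $h$, and of $p$ and $\nu$. (The direction signs $\sigma_m^x,\sigma_n^y\in\{\pm1\}$ only reflect the stencils across the relevant edge and do not affect constancy.) Writing each discrete equation in the form $\mathcal{L}_h(\bu_h)_{i,j}=h^4(\bpsi_h)_{i,j}$ at interior nodes and $\mathcal{L}_h^{x,r}(u_h^{(r)})_{i,j}=h^3\varphi^{(r)}_{1,\sigma_m^x h}$ at side nodes (and analogously at corners), I would absorb the scaling powers $h^3,h^4$ into the right-hand side, so that the matrix carries only the constant stencil weights. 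Since $A_1$ and $A_2$ differ solely through the component-dependent boundary and corner rows (e.g.\ $C^{(1)}$ versus $C^{(2)}$, and $\lambda_1$ versus $\lambda_2$), both are constant, which also explains why two distinct matrices are required.

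Finally I would establish the independence of $b_1,b_2$ from $p$ and $\nu$. By construction the right-hand side at each node is a fixed linear combination --- with the constant coefficients and $h$-powers from \eqref{bpsi:h}, \eqref{rhos}, and \eqref{right:corner} --- of $\bpsi$, the boundary functions $\varphi_j^{(r)}$, and their partial derivatives, together with the Dirichlet data $\bm g$. The crucial input is \cref{thm:all:pde:2D}: the identities \eqref{nop:all} show that the only places where $p$ and $\nu$ could enter, namely the combinations $\nu^{-1}[\Delta{\bm f}-\nab(\nab\cdot{\bm f})]$ inside $\bpsi$ and $\nu^{-1}[f^{(1)}_y-f^{(2)}_x]$ inside the $\varphi_j^{(r)}$, equal the purely velocity-based (hence $p$- and $\nu$-free) expressions $-\Delta^2\bu+\nab(\nab\cdot\Delta\bu)$ and $\Delta[u^{(2)}_x-u^{(1)}_y]$. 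Therefore $\bpsi$ and every $\varphi_j^{(r)}$ depend only on ${\bm f}$, $\phi$, and $\bm g$ and are independent of $p$ and $\nu$; taking constant-coefficient combinations and multiplying by powers of $h$ preserves this, so $b_1$ and $b_2$ inherit the independence. (In the homogeneous case $\bm g=0$ treated here the data reduce to ${\bm f}$ and $\phi$ as stated; the nonhomogeneous extension only adds $\bm g$-contributions, which are likewise $p$- and $\nu$-free.) I would expect the only genuine obstacle to be the bookkeeping of the assembly step --- confirming that the decomposition of $\overline\Omega_h$ into interior, side, corner, special, and Dirichlet nodes is exhaustive and assigns exactly one equation per unknown, so that $A_1,A_2$ are square and well defined --- while the substantive mathematical content is already carried by the $\nu^{-1}$-cancellation identities \eqref{nop:all} proved in \cref{thm:all:pde:2D}.
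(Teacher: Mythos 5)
Your proposal is correct and takes essentially the same route as the paper, which states \cref{thm:matrix:form} without a formal proof precisely because it is the straightforward assembly you describe: each of the operators in \cref{theorem:interior,theorem:side:1:generalized,theorem:corner:1} and \cref{FDMs:special:point} acts on a single velocity component with constant rational stencil weights (the $h$-powers absorbed into the right-hand side), and the $p$- and $\nu$-independence of $b_1,b_2$ is exactly the content of the identities \eqref{nop:all} established in \cref{thm:all:pde:2D}. Your elaboration of the bookkeeping---one equation per node, exhaustiveness of the node decomposition, squareness of $A_1,A_2$---is a faithful filling-in of what the paper leaves implicit.
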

\subsection{Approximating the pressure gradient}\label{sec:pxpy}
Using the numerical velocity approximation $\bu_h$ computed by the FDM in Lemmas \ref{theorem:interior}--\ref{theorem:corner:1}, we  introduce a finite difference scheme to approximate the gradient of the pressure $p$. 
Without loss of generality, we assume that  $ a^y_n\le x\le b_n^y$ and  $a^x_m\le y\le b^x_m$ in this subsection.
By \eqref{Model:Original}, we have
\be\label{pxpy}
p_x  =f^{(1)}+\nu \Delta u^{(1)}, \qquad  \qquad
p_y  =f^{(2)}+\nu \Delta u^{(2)}.
\ee
For any smooth 1D function $a(x)\in C^7(\R)$,
	\begin{align*}
	 a_{xx}(\xa_i)=&\tfrac{1}{h^2}\big[\tfrac{1}{90}a(\xa_{i-3})-\tfrac{3}{20} a(\xa_{i-2})+  \tfrac{3}{2} a(\xa_{i-1})-  \tfrac{49}{18} a(\xa_{i})\\
	&+\tfrac{3}{2}  a(\xa_{i+1})-\tfrac{3}{20}  a(\xa_{i+2})+\tfrac{1}{90} a(\xa_{i+3})\big]+\bo(h^{6}), \qquad \text{if} \qquad a^y_n \le \xa_{i\pm 3} \le b^y_n,\\
	 a_{xx}(\xa_i)=&\tfrac{1}{h^2}\big[ -\tfrac{7}{10} a(\xa_{i\pm7}) +\tfrac{1019}{180}  a(\xa_{i\pm6}) -\tfrac{201}{10}  a(\xa_{i\pm5}) +41  a(\xa_{i\pm4})-  \tfrac{949}{18} a(\xa_{i\pm3})\\
	&+  \tfrac{879}{20} a(\xa_{i\pm2})   	 -\tfrac{223}{10} a(\xa_{i\pm1})+\tfrac{469}{90}a(\xa_{i}) \big]+\bo(h^{6}), \qquad \text{if} \qquad a^y_n \le \xa_{i\pm7} \text{ and } \xa_{i}\le b^y_n, \\
	 a_{xx}(\xa_i)=&\tfrac{1}{h^2}\big[  \tfrac{11}{180} a(\xa_{i\pm6}) -\tfrac{1}{2}  a(\xa_{i\pm5})+\tfrac{9}{5}  a(\xa_{i\pm4})-\tfrac{67}{18}  a(\xa_{i\pm3})+  \tfrac{19}{4} a(\xa_{i\pm2})\\
	&-  \tfrac{27}{10} a(\xa_{i\pm1})-\tfrac{7}{18} a(\xa_{i})+\tfrac{7}{10}a(\xa_{i\mp1})\big]+\bo(h^{6}), \qquad \text{if} \qquad a^y_n \le \xa_{i\pm6} \text{ and } \xa_{i\mp1}\le b^y_n, \\
	 a_{xx}(\xa_i)=&\tfrac{1}{h^2}\big[  -\tfrac{1}{90} a(\xa_{i\pm5})+\tfrac{4}{45}  a(\xa_{i\pm4}) -\tfrac{3}{10}  a(\xa_{i\pm3}) +\tfrac{17}{36}  a(\xa_{i\pm2})+  \tfrac{13}{18} a(\xa_{i\pm1})\\
	& -  \tfrac{21}{10} a(\xa_{i}) +\tfrac{107}{90} a(\xa_{i\mp1}) -\tfrac{11}{180}a(\xa_{i\mp2})\big]+\bo(h^{6}), \qquad \text{if} \qquad a^y_n \le \xa_{i\pm5} \text{ and } \xa_{i\mp2}\le b^y_n.
	\end{align*}
We can then use the above FD operators to compute $((\bu_{xx})_{h})_{i,j}$
at all grid points $(\xa_i,\ya_j)$. Similarly, we can compute  $((\bu_{yy})_{h})_{i,j}$.

 By \eqref{pxpy}, we approximate gradient $(p_x,p_y)$ of the pressure $p$ with the sixth-order accuracy   as follows:
	\begin{align*}
	&((p_x)_h)_{i,j}  =f^{(1)}(\xa_i,\ya_j)+\nu ((u^{(1)}_{xx})_{h})_{i,j}+\nu ((u^{(1)}_{yy})_{h})_{i,j}, \\
	&((p_y)_h)_{i,j}  =f^{(2)}(\xa_i,\ya_j)+\nu ((u^{(2)}_{xx})_{h})_{i,j}+\nu ((u^{(2)}_{yy})_{h})_{i,j}.
	\end{align*}

	\section{Numerical Experiments}\label{numerical:test}
In this section, we test the FDM for the Stokes problem \eqref{Model:Original},
measuring the error  in the  $\ell_\infty$-norm:
\begin{align*}
& \|\bu_h-\bu\|_{\infty}:=\max\left\{\left\|u_h^{(1)}-u^{(1)}\right\|_{\infty},\left\|u_h^{(2)}-u^{(2)}\right\|_{\infty}\right\},\\
& \|(\nab p)_h-\nab p\|_{\infty}:=\max\left\{\|(p_x)_h-p_x\|_{\infty},\|(p_y)_h-p_y\|_{\infty}\right\}.
\end{align*}
 In addition, we compute $\kappa$, the condition number of the matrix in \eqref{matrix:A1:A2}. We solve $\bu_h=(u^{(1)}_h,u^{(2)}_h)$ together in \cref{example1,example2}, and solve $u^{(1)}_h,u^{(2)}_h$ separately in \cref{example3}.
\begin{example}\label{example1}
	\normalfont
	Consider the square domain $\Omega=(-1,1)^2$, and
	let the functions in \eqref{Model:Original} be given by
	\begin{align*}
	 \bu=\left( \cos(3x-3y) \exp(y), \exp(x) \sin(3 x)\cos(3 y) \right ),\qquad p=\sin(x-3y),\qquad \nu=1,10^{-3},10^{-6}.
	\end{align*}
	The functions ${\bm f}, \phi,$ and ${\bm g}$ are then obtained by plugging the above functions into \eqref{Model:Original}.
	The numerical results are presented in Tables \ref{tab:example:1:u}--\ref{tab:example:1:p} and Figures \ref{fig:u:example:1}--\ref{fig:p:example:1}.  From \cref{thm:all:pde:2D,thm:matrix:form}, the viscosity $\nu$  and pressure $p$
	do not appear in the reformulated PDEs  \eqref{eqn:FourthOrderPDE} nor in  the linear system \eqref{matrix:A1:A2} of our proposed sixth-order FDM, i.e.,	our FDM is viscosity- and pressure-robust. In particular,  
	Tables \ref{tab:example:1:u}--\ref{tab:example:1:p} show that the error $\|\bu_h-\bu\|_{\infty}$ is independent of $\nu$ and $p$, and both
	$\|\bu_h-\bu\|_\infty$  and $ \|(\nab p)_h-\nab p\|_{\infty}$ converge with sixth order. Finally,
	we see that the condition number of the linear system  \eqref{matrix:A1:A2} scales like $\bo(h^{-4})$, which is expected since the scheme
	is a discretization of a fourth-order PDE.
\end{example}
\begin{table}[htbp]
	\caption{Performance in \cref{example1}  of the proposed FDM.}
	\centering
	\setlength{\tabcolsep}{0.7mm}{
		 \begin{tabular}{c|c|c|c|c|c|c|c|c}
			\hline
			\multicolumn{1}{c|}{}  &
			 \multicolumn{2}{c|}{$\nu=1$}  &
			 \multicolumn{2}{c|}{$\nu=10^{-3}$}  &
			 \multicolumn{2}{c|}{$\nu=10^{-6}$} &
			 \multicolumn{2}{c}{any $\nu>0$} 
			  \\
			\hline
			$h$
			&  $\|\bu_h-\bu\|_{\infty}$
			&order &  $\|\bu_h-\bu\|_{\infty}$ & order &  $\|\bu_h-\bu\|_{\infty}$
			&order & $\kappa$ & ratio of $\kappa$ \\
			\hline
$\tfrac{1}{2^2}$ & 2.3354E-01 &  & 2.3354E-01 &  & 2.3354E-01 &    
&1.74E+03 & \\
$\tfrac{1}{2^3}$ & 2.4130E-03 & 6.60 & 2.4130E-03 & 6.60 & 2.4130E-03 & 6.60  
& 4.30E+04 & 24.7\\
$\tfrac{1}{2^4}$ & 2.8524E-05 & 6.40 & 2.8524E-05 & 6.40 & 2.8524E-05 & 6.40  
& 6.77E+05 & 15.7\\
$\tfrac{1}{2^5}$ & 4.4981E-07 & 5.99 & 4.4981E-07 & 5.99 & 4.4981E-07 & 5.99   
& 1.04E+07 & 15.3\\
$\tfrac{1}{2^6}$ & 7.5936E-09 & 5.89 & 7.5936E-09 & 5.89 & 7.5936E-09 & 5.89  
& 1.63E+08 & 15.7\\		
			\hline
	\end{tabular}}
\label{tab:example:1:u}
\end{table}	
\begin{table}[htbp]
	\caption{Performance in \cref{example1}  of the proposed FDM.}
	\centering
	\setlength{\tabcolsep}{0.5mm}{
		 \begin{tabular}{c|c|c|c|c|c|c}
			\hline
			\multicolumn{1}{c|}{}  &
			 \multicolumn{2}{c|}{$\nu=1$}  &
			 \multicolumn{2}{c|}{$\nu=10^{-3}$}  &
			 \multicolumn{2}{c}{$\nu=10^{-6}$}   \\
			\hline
			$h$
			& $\|(\nab p)_h-\nab p\|_{\infty}$
			&order &  $\|(\nab p)_h-\nab p\|_{\infty}$ & order &  $\|(\nab p)_h-\nab p\|_{\infty}$\\
			\hline
$\tfrac{1}{2^2}$ & 5.4146E+0 &  & 5.4146E-03 &  & 5.4146E-06 & \\  
$\tfrac{1}{2^3}$ & 6.3722E-02 & 6.41 & 6.3722E-05 & 6.41 & 6.3722E-08 & 6.41\\ 
$\tfrac{1}{2^4}$ & 1.1137E-03 & 5.84 & 1.1137E-06 & 5.84 & 1.1137E-09 & 5.84\\ 
$\tfrac{1}{2^5}$ & 1.9109E-05 & 5.86 & 1.9109E-08 & 5.86 & 1.9109E-11 & 5.86\\ 
$\tfrac{1}{2^6}$ & 3.2051E-07 & 5.90 & 3.2051E-10 & 5.90 & 3.2051E-13 & 5.90\\ 
			\hline
	\end{tabular}}
\label{tab:example:1:p}
\end{table}	
\begin{figure}[htbp]
	\centering
	\begin{subfigure}[b]{0.23\textwidth}
		\includegraphics[width=4cm,height=4cm]{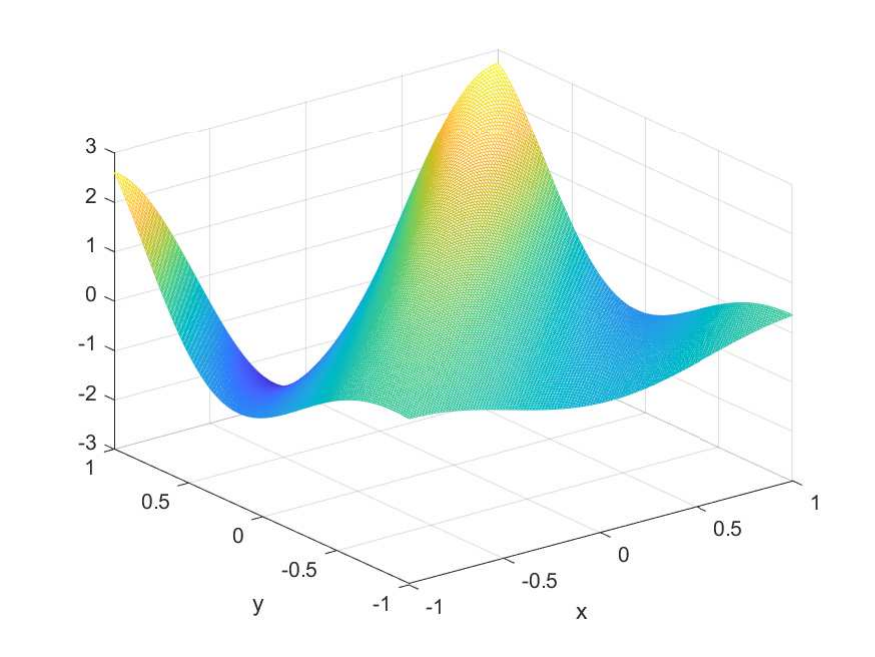}
	\end{subfigure}
	\begin{subfigure}[b]{0.23\textwidth}
		\includegraphics[width=4cm,height=4cm]{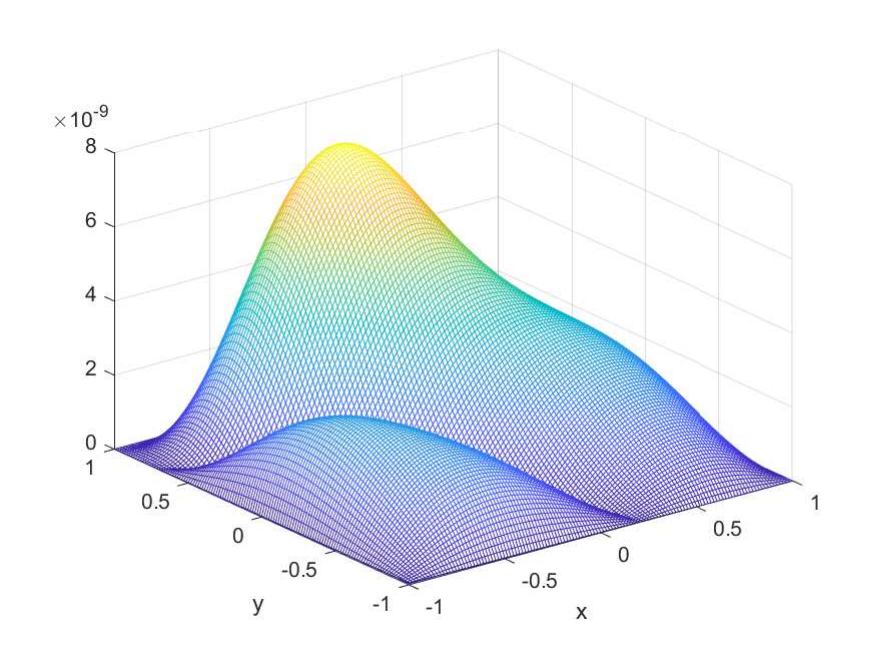}
	\end{subfigure}
	\begin{subfigure}[b]{0.23\textwidth}
		\includegraphics[width=4cm,height=4cm]{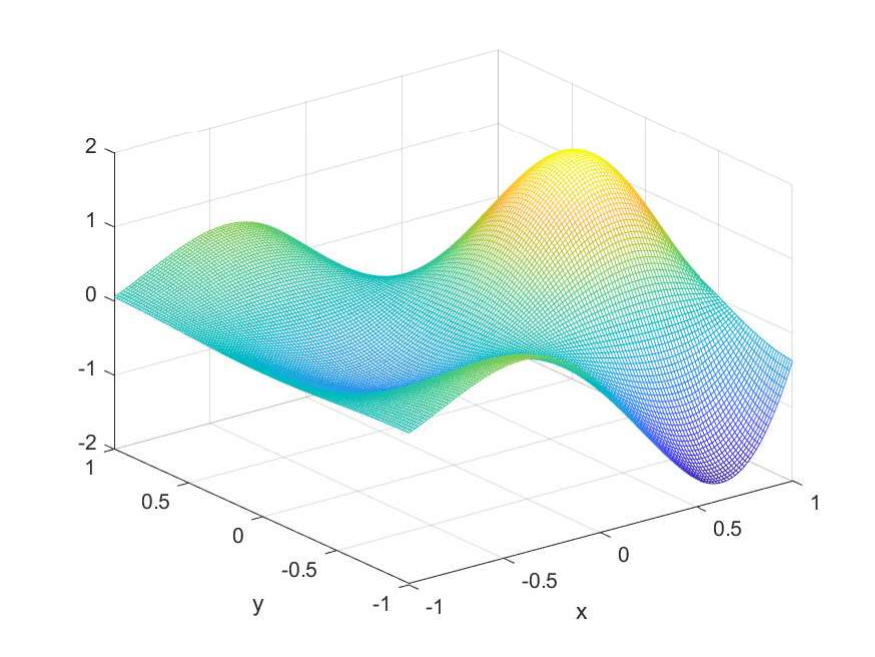}
	\end{subfigure}
	\begin{subfigure}[b]{0.23\textwidth}
		\includegraphics[width=4cm,height=4cm]{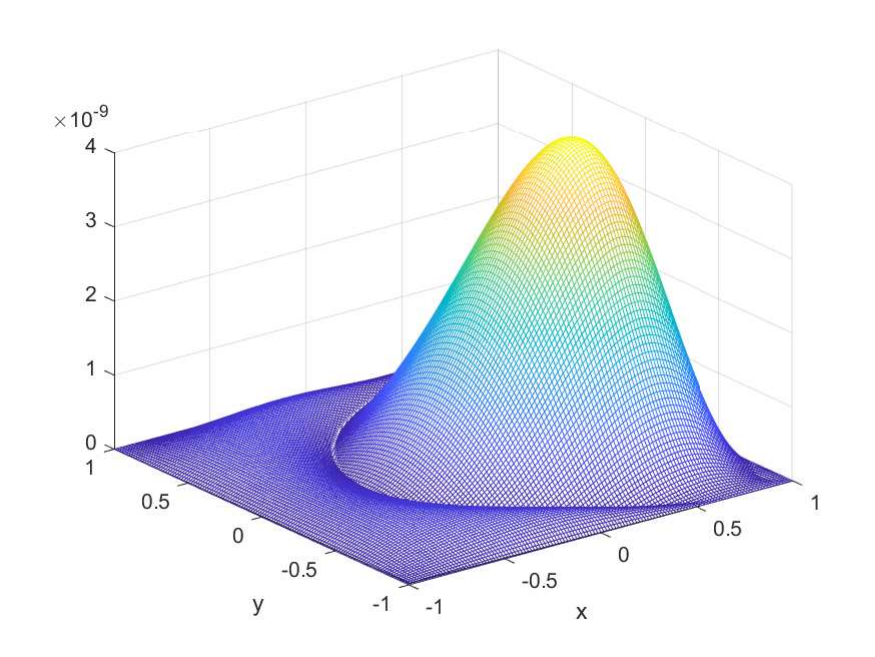}
	\end{subfigure}
	\caption{\cref{example1}:  $u^{(1)}$ (first panel), $|u_h^{(1)}-u^{(1)}|$ (second panel), $u^{(2)}$ (third panel), and $|u_h^{(2)}-u^{(2)}|$ (fourth panel) at all grid points in $\overline{\Omega}$ with $h=\tfrac{1}{2^6}$ and $\nu=10^{-6}$.}
	\label{fig:u:example:1}
\end{figure}	
\begin{figure}[htbp]
	\centering
	\begin{subfigure}[b]{0.23\textwidth}
		\includegraphics[width=4cm,height=4cm]{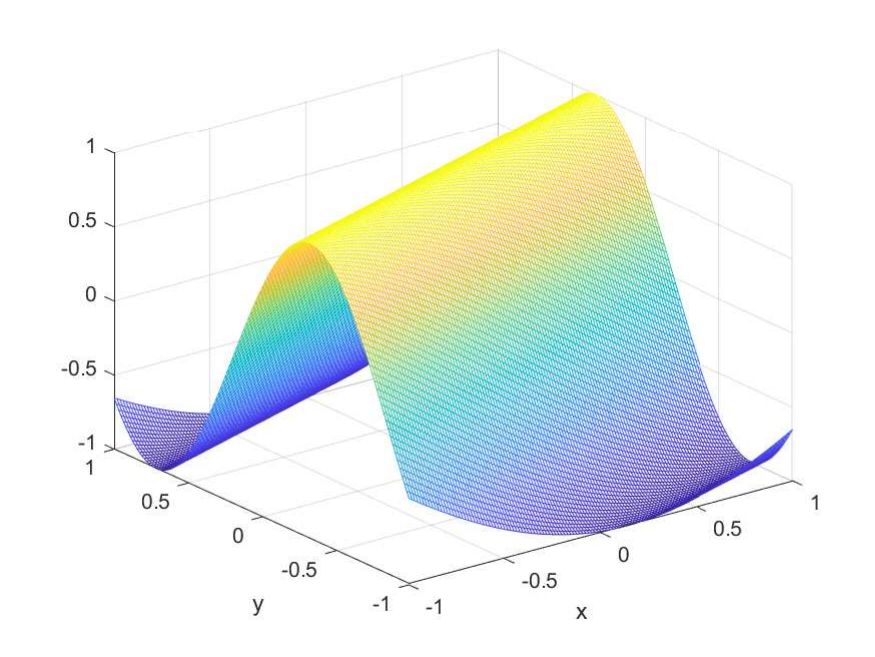}
	\end{subfigure}
	\begin{subfigure}[b]{0.23\textwidth}
		\includegraphics[width=4cm,height=4cm]{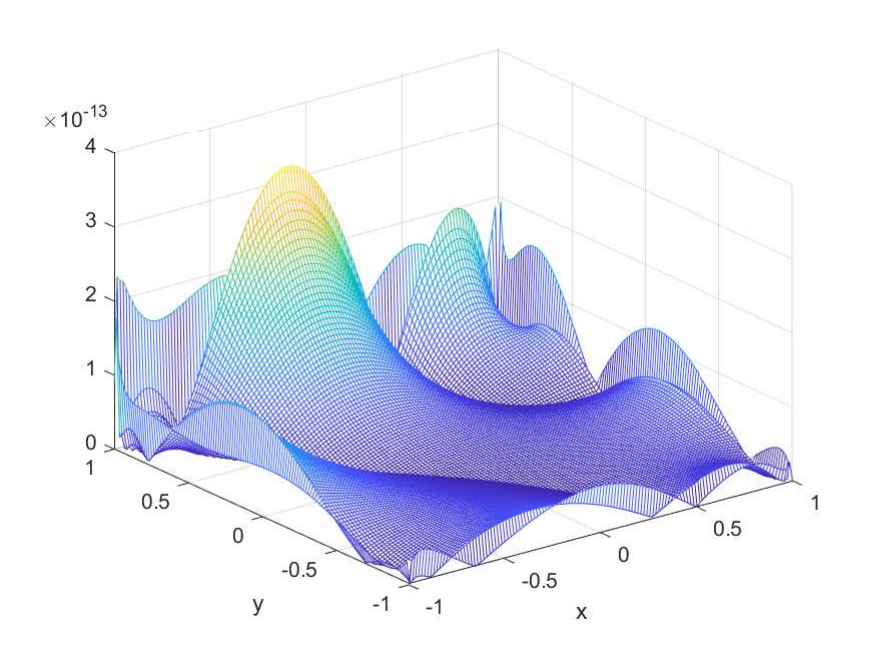}
	\end{subfigure}
	\begin{subfigure}[b]{0.23\textwidth}
		\includegraphics[width=4cm,height=4cm]{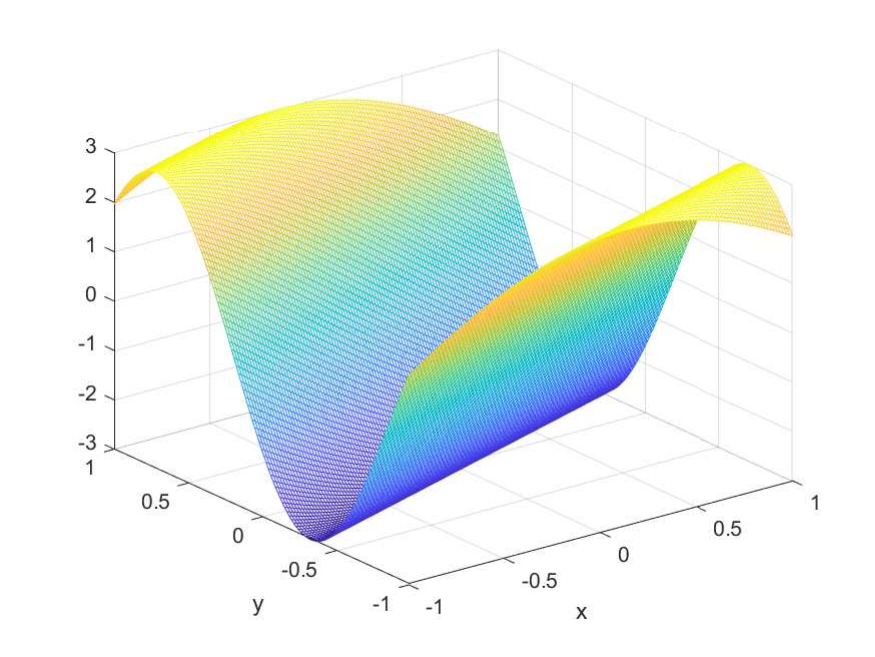}
	\end{subfigure}
	\begin{subfigure}[b]{0.23\textwidth}
		\includegraphics[width=4cm,height=4cm]{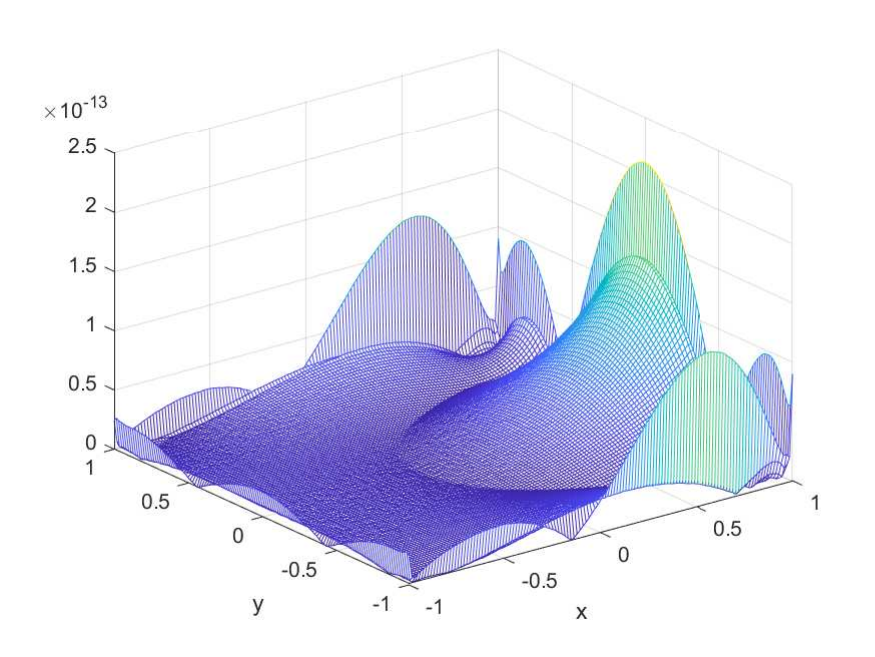}
	\end{subfigure}
	\caption{\cref{example1}:  $p_x$ (first panel), $|(p_x)_h-p_x|$ (second panel), $p_y$ (third panel), and $|(p_y)_h-p_y|$ (fourth panel) at all grid points in $\overline{\Omega}$ with $h=\tfrac{1}{2^6}$ and $\nu=10^{-6}$.}
	\label{fig:p:example:1}
\end{figure}	
\begin{example}\label{example2}
	\normalfont
To demonstrate that the derivations of our sixth-order FDM in \cref{FDMs:all:points} and reformulated PDEs in \cref{thm:all:pde:2D} (the biharmonic equation and third-order PDEs)  do not require  
$\Omega$ to be simply connected,  we consider a triply connected domain.
	Let  $\Omega=(-1,1)^2 \setminus (\cup_{i=1}^3\Omega_i )$ with $\Omega_1=[-1/2,1/4]\times [-1/2,-1/4]$, $\Omega_2=[1/2,3/4]\times [-3/4,1/2]$,  $\Omega_3=[-3/4,0]\times [0,3/4]$,  (see  Figures \ref{fig:u:1:example:2}--\ref{fig:u:2:example:2}) and
	the functions in \eqref{Model:Original} are given by
	\begin{align*}
&	\bu=\left( -\cos(4x) \sin(6y), \sin(4x)\cos(6y) \right ), \qquad \nu>0,\\
& p=\lambda \exp(\tfrac{x}{2}+3y)\text{ with } \lambda=1,10^{10},\quad \text{and} \quad p=\tfrac{\ln(x)}{x^2-1}\tfrac{\ln(y)}{y^2-1},
\end{align*}
and ${\bm f}, \phi, {\bm g}$ can be obtained by plugging the above functions into \eqref{Model:Original}.
	The numerical results are presented in Table \ref{tab:1:example:2} and Figures \ref{fig:u:1:example:2}--\ref{fig:u:2:example:2}. According to the results, our proposed FDM still maintains  sixth-order accuracy on the triply connected domain. 
	%
	As expected, Table \ref{tab:1:example:2} also numerically verifies that the error of $\|\bu_h-\bu\|_{\infty}$ is independent of  $\nu$ and $p$.
\end{example}
\begin{table}[htbp]
	\caption{Performance in \cref{example2}  of the proposed FDM.}
	\centering
	\setlength{\tabcolsep}{1mm}{
		 \begin{tabular}{c|c|c|c|c|c|c|c|c}
			\hline
			\multicolumn{1}{c|}{}  &
			\multicolumn{8}{c}{any $\nu>0$}  \\
			\hline
			\multicolumn{1}{c|}{}  &
			 \multicolumn{2}{c|}{$p=\exp(\tfrac{x}{2}+3y)$}  &
			 \multicolumn{2}{c|}{$p=10^{10}\exp(\tfrac{x}{2}+3y)$}  &
			 \multicolumn{2}{c|}{$p=\tfrac{\ln(x)}{x^2-1}\tfrac{\ln(y)}{y^2-1}$} &
			\multicolumn{2}{c}{any $p$} \\
			\hline
			$h$
			&  $\|\bu_h-\bu\|_{\infty}$
			&order &  $\|\bu_h-\bu\|_{\infty}$ & order &  $\|\bu_h-\bu\|_{\infty}$
			&order  & $\kappa$ & ratio  of $\kappa$ \\
			\hline
$\tfrac{1}{2^3}$ & 1.3318E-01 &  & 1.3318E-01 &  & 1.3318E-01 &  & 1.81E+03 & \\
$\tfrac{1}{2^4}$ & 1.6349E-03 & 6.35 & 1.6349E-03 & 6.35 & 1.6349E-03 & 6.35 & 4.33E+04 & 23.9\\
$\tfrac{1}{2^5}$ & 2.3467E-05 & 6.12 & 2.3467E-05 & 6.12 & 2.3467E-05 & 6.12 & 5.80E+05 & 13.4\\
$\tfrac{1}{2^6}$ & 3.6126E-07 & 6.02 & 3.6126E-07 & 6.02 & 3.6126E-07 & 6.02 & 9.68E+06 & 16.7\\
$\tfrac{1}{2^7}$ & 5.2356E-09 & 6.11 & 5.2356E-09 & 6.11 & 5.2356E-09 & 6.11 & 1.57E+08 & 16.2\\		
			\hline
	\end{tabular}}
	\label{tab:1:example:2}
\end{table}	
\begin{figure}[htbp]
	\centering
	 \begin{subfigure}[b]{0.3\textwidth}
	 \includegraphics[width=5cm,height=5cm]{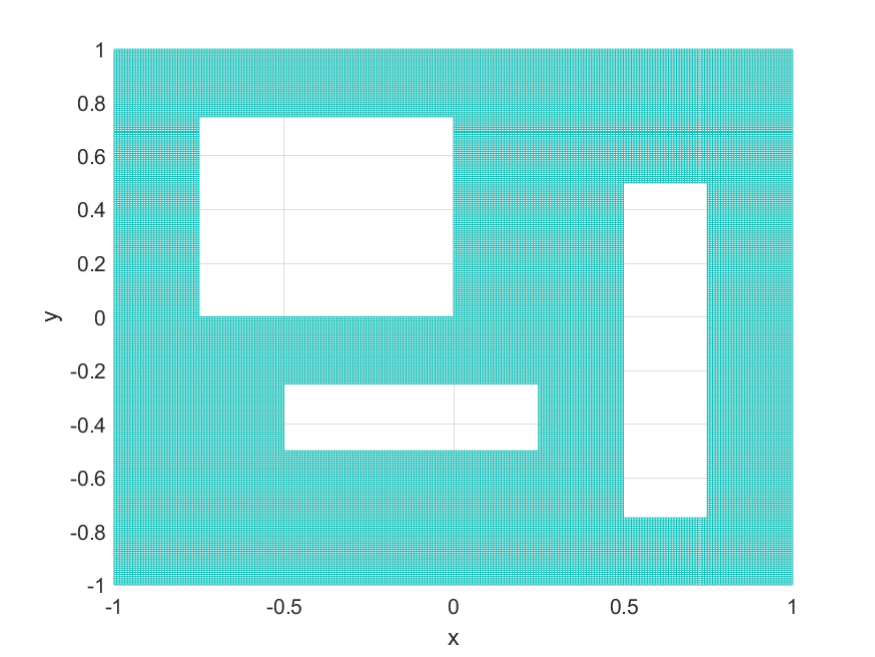}
\end{subfigure}	
	 \begin{subfigure}[b]{0.3\textwidth}
		 \includegraphics[width=5cm,height=5cm]{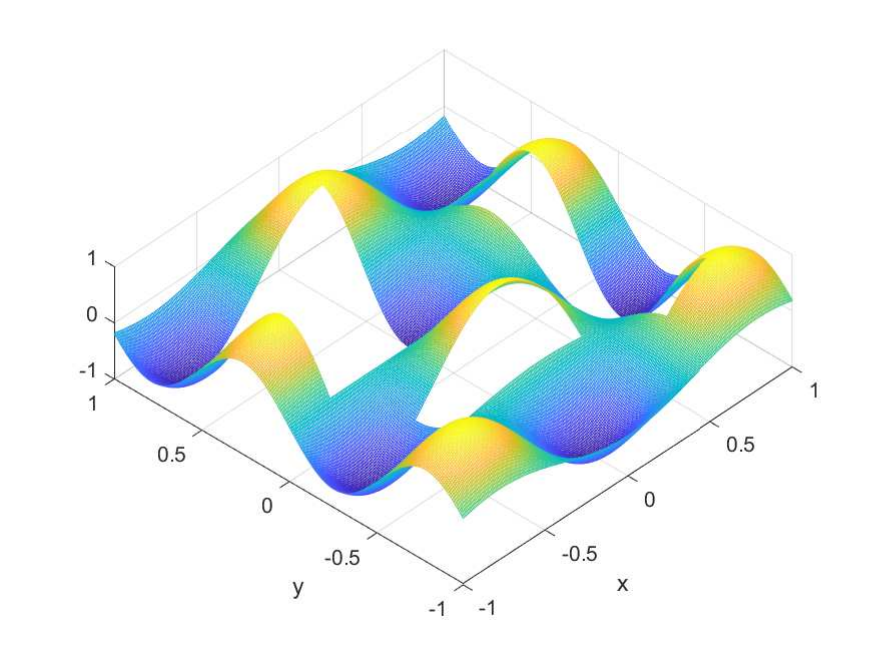}
	\end{subfigure}
	 \begin{subfigure}[b]{0.3\textwidth}
		 \includegraphics[width=5cm,height=5cm]{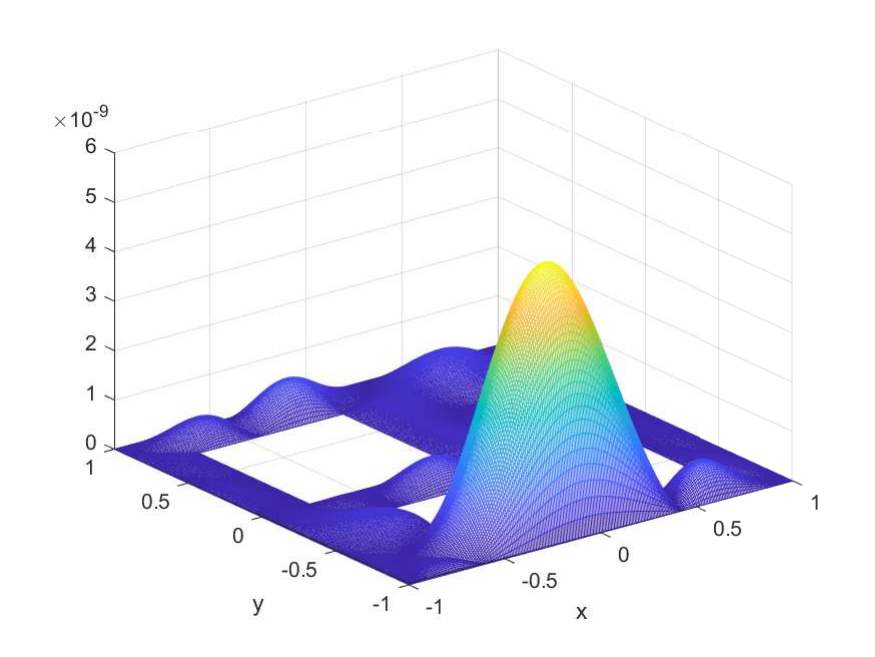}
	\end{subfigure}
	\caption{\cref{example2}: the domain $\Omega$ (left), $u^{(1)}$ (middle),  and $|u_h^{(1)}-u^{(1)}|$ (right) at all grid points in $\overline{\Omega}$ with $h=\tfrac{1}{2^7}$ and any $\nu>0$.}
	\label{fig:u:1:example:2}
\end{figure}	
\begin{figure}[htbp]
	\centering
	 \begin{subfigure}[b]{0.3\textwidth}
		 \includegraphics[width=5cm,height=5cm]{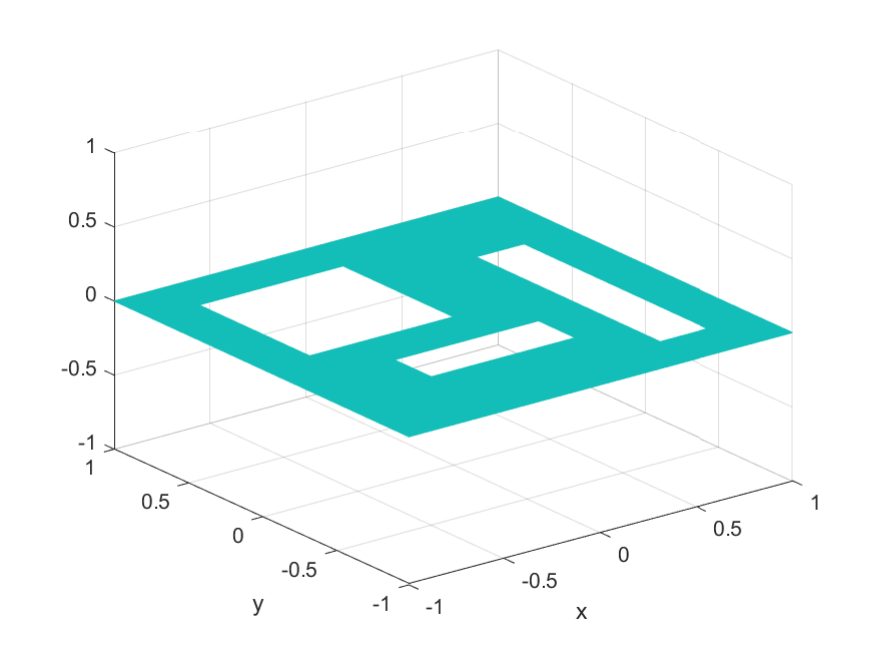}
	\end{subfigure}	
	 \begin{subfigure}[b]{0.3\textwidth}
		 \includegraphics[width=5cm,height=5cm]{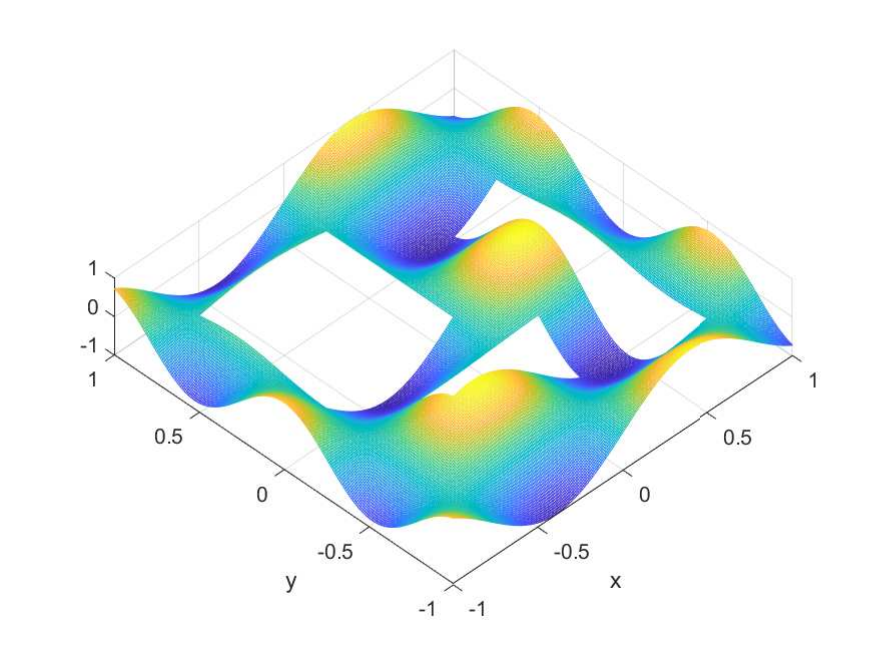}
	\end{subfigure}
	 \begin{subfigure}[b]{0.3\textwidth}
		 \includegraphics[width=5cm,height=5cm]{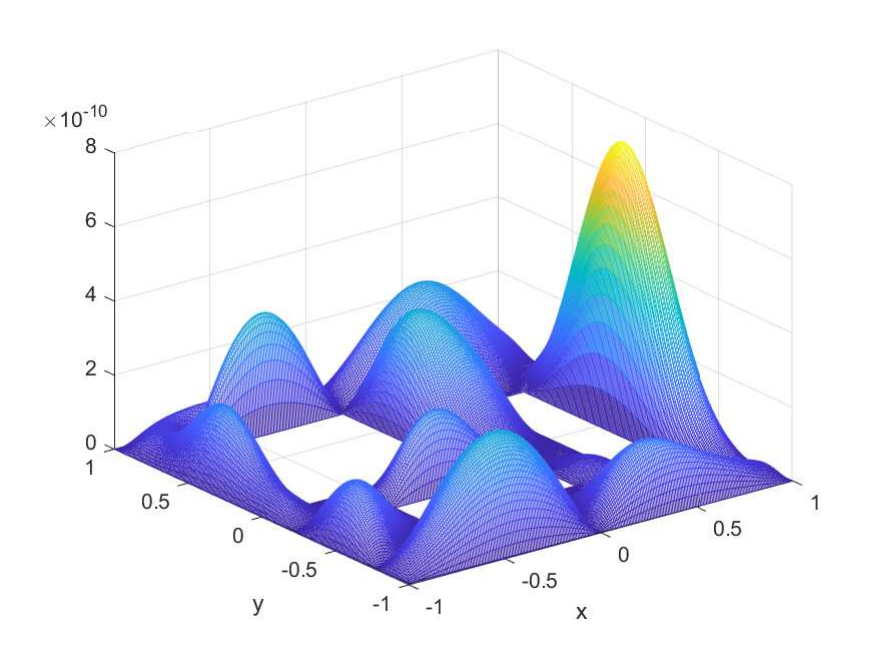}
	\end{subfigure}
	\caption{\cref{example2}: the domain $\Omega$ (left), $u^{(2)}$ (middle),  and $|u_h^{(2)}-u^{(2)}|$ (right) at all grid points in $\overline{\Omega}$ with $h=\tfrac{1}{2^7}$ and any $\nu>0$.}
	\label{fig:u:2:example:2}
\end{figure}	
\begin{example}\label{example3}
	\normalfont
In \cref{example1,example2}, we test smooth velocity $\bu$ and compute  $(u^{(1)}_h,u^{(2)}_h)$ together. In this example, we use \cite[(5.1)--(5.2)]{Brenner2010} and \cite[p.107]{Grisvard1992} to construct a non-smooth 
velocity $\bu$, and solve $(u^{(1)}_h,u^{(2)}_h)$ separately.

	Let  $\Omega=(-1,1)^2\setminus ([0,1) \times (-1,0])$,  (see  Figures \ref{fig:u:1:example:3}--\ref{fig:u:2:example:3}) and
	let the functions in \eqref{Model:Original} be given by
	\begin{align*}
		\bu=( \zeta_y, -\zeta_x ),\qquad p= \exp(x+y),\qquad \phi=0, \qquad \nu>0,
	\end{align*}
	where
	\begin{align*}
\zeta=&(x^2-1)^2(y^2-1)^2\left(\sqrt{x^2+y^2}\right)^{1+z}\eta, \qquad \omega=\tfrac{3\pi}{2},\qquad  z=1.54,\qquad \theta=\arctan(y/x),\\
 \eta=&\left(\tfrac{1}{z-1}\sin((z-1)\omega)-\tfrac{1}{z+1}\sin((z+1)\omega)\right) \left( \cos((z-1)\theta)-\cos((z+1)\theta)\right),\\
 &-\left(\tfrac{1}{z-1}\sin((z-1)\theta)-\tfrac{1}{z+1}\sin((z+1)\theta)\right) \left( \cos((z-1)\omega)-\cos((z+1)\omega)\right),
	\end{align*}
	and ${\bm f},  {\bm g}$ can be obtained by plugging the above functions into \eqref{Model:Original}.
	The numerical results are presented in Table \ref{tab:1:example:3} and Figures \ref{fig:u:1:example:3}--\ref{fig:u:2:example:3}.  The results in Table \ref{tab:1:example:3} confirm the decoupling property of the proposed  reformulated PDEs in \eqref{eqn:FourthOrderPDE}, and the FDM in \eqref{matrix:A1:A2}. By definitions of $\zeta, \bu$ and using \cite[Theorem 1.26 on page 24]{Nicaise1993}, we have $\zeta\in H^{1+z-\epsilon}(\Omega)$ and $\bu\in \bH^{z-\epsilon}(\Omega)$ for any $\epsilon>0$.   The numerical orders in Table \ref{tab:1:example:3} seem to validate the application of 
	the proposed scheme for singular $\bu$. 
	In particular, we see that the rate of convergence for $\|u_h^{(1)}-u^{(1)}\|_\infty$ is approximately $z-1  = 0.54$.

\end{example}
\begin{table}[htbp]
	\caption{Performance in \cref{example3}  of the proposed FDM.}
	\centering
	\setlength{\tabcolsep}{2mm}{
		 \begin{tabular}{c|c|c|c|c|c|c}
			\hline
			\multicolumn{1}{c|}{}  &
			 \multicolumn{6}{c}{any $\nu>0$}  \\
			\hline
			$h$
			&  $\|u_h^{(1)}-u^{(1)}\|_{\infty}$
			&order &  $\|u_h^{(2)}-u^{(2)}\|_{\infty}$  
			&order  & $\kappa$ & ratio  of $\kappa$ \\
			\hline
$\tfrac{1}{2^3}$ & 9.8448E-01 &  & 1.0550E+0 &     &        9.02E+03 & \\
$\tfrac{1}{2^4}$ & 3.7817E-01 & 1.38  & 1.3336E-01 & 2.98        & 1.86E+05 & 20.6\\
$\tfrac{1}{2^5}$ & 2.4121E-01 & 0.65   & 9.6291E-02 & 0.47       & 3.07E+06 & 16.5\\
$\tfrac{1}{2^6}$ & 1.6540E-01 & 0.54  & 6.5650E-02 & 0.55       & 4.90E+07 & 16.0\\
$\tfrac{1}{2^7}$ & 1.1652E-01 & 0.51   & 4.3079E-02 & 0.61      & 7.87E+08 & 16.0\\
$\tfrac{1}{2^8}$ & 8.3405E-02 & 0.48   & 2.7175E-02 & 0.66      & 1.27E+10 & 16.1\\
$\tfrac{1}{2^9}$ & 6.0298E-02 & 0.47   & 1.6400E-02 & 0.73      & 2.03E+11 & 16.1\\			 
			\hline
	\end{tabular}}
\label{tab:1:example:3}
\end{table}	
\begin{figure}[htbp]
	\centering
	 \begin{subfigure}[b]{0.3\textwidth}
		 \includegraphics[width=5cm,height=5cm]{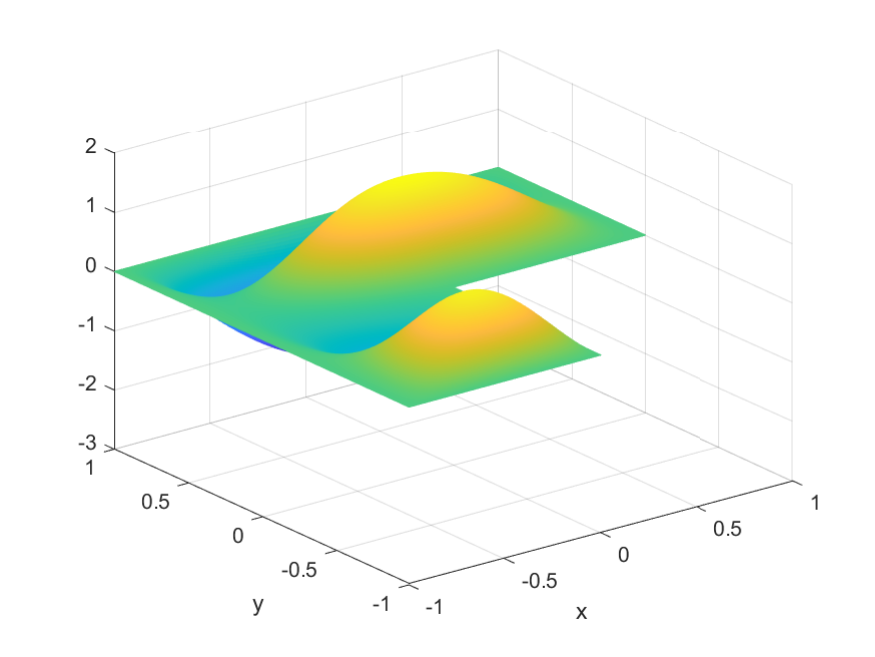}
	\end{subfigure}	
	 \begin{subfigure}[b]{0.3\textwidth}
		 \includegraphics[width=5cm,height=5cm]{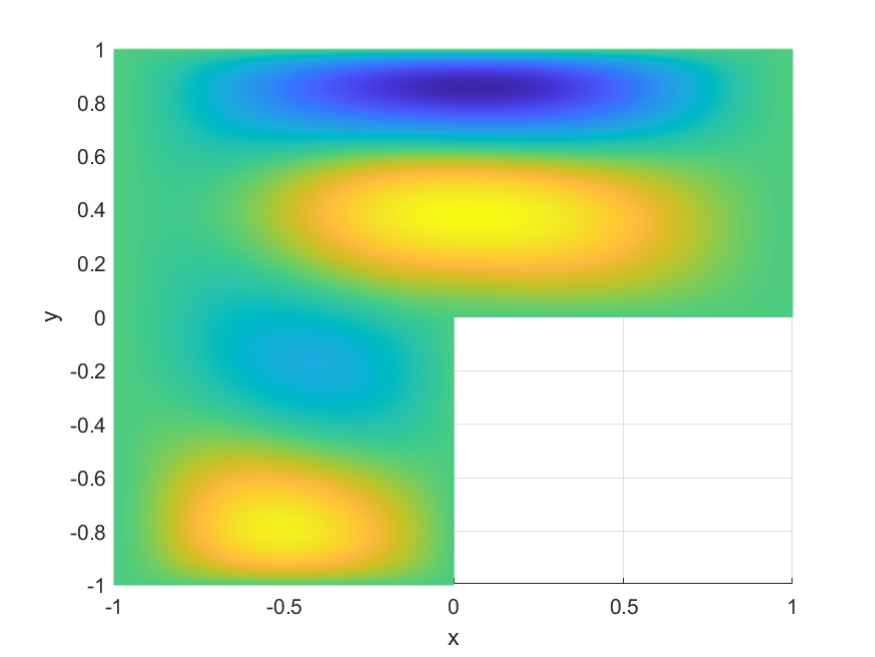}
	\end{subfigure}
	 \begin{subfigure}[b]{0.3\textwidth}
		 \includegraphics[width=5cm,height=5cm]{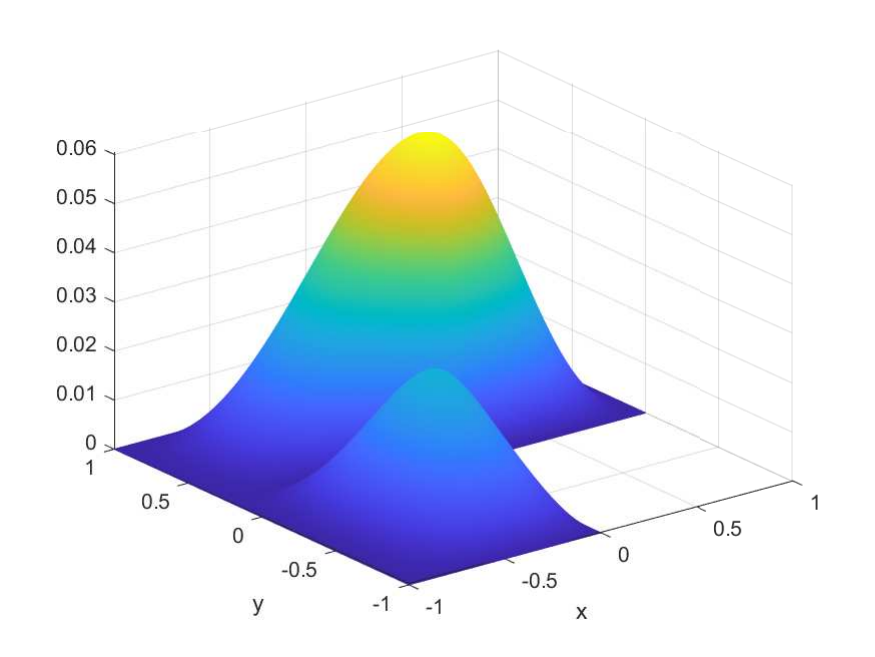}
	\end{subfigure}
	\caption{\cref{example3}:  $u^{(1)}$ (left and middle),  and $|u_h^{(1)}-u^{(1)}|$ (right) at all grid points in $\overline{\Omega}$ with $h=\tfrac{1}{2^9}$ and any $\nu>0$.}
	\label{fig:u:1:example:3}
\end{figure}
\begin{figure}[htbp]
	\centering
	 \begin{subfigure}[b]{0.3\textwidth}
		 \includegraphics[width=5cm,height=5cm]{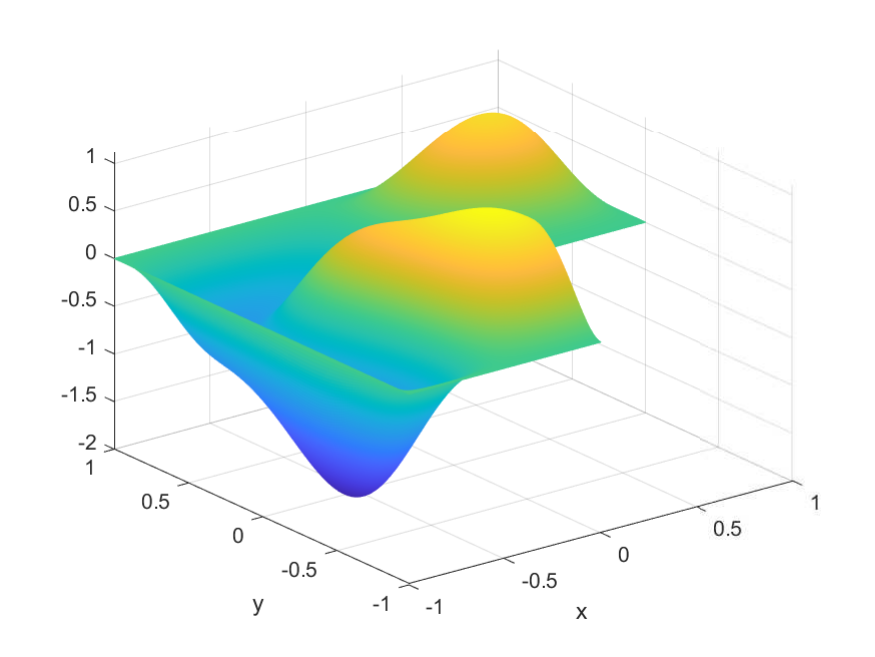}
	\end{subfigure}	
	 \begin{subfigure}[b]{0.3\textwidth}
		 \includegraphics[width=5cm,height=5cm]{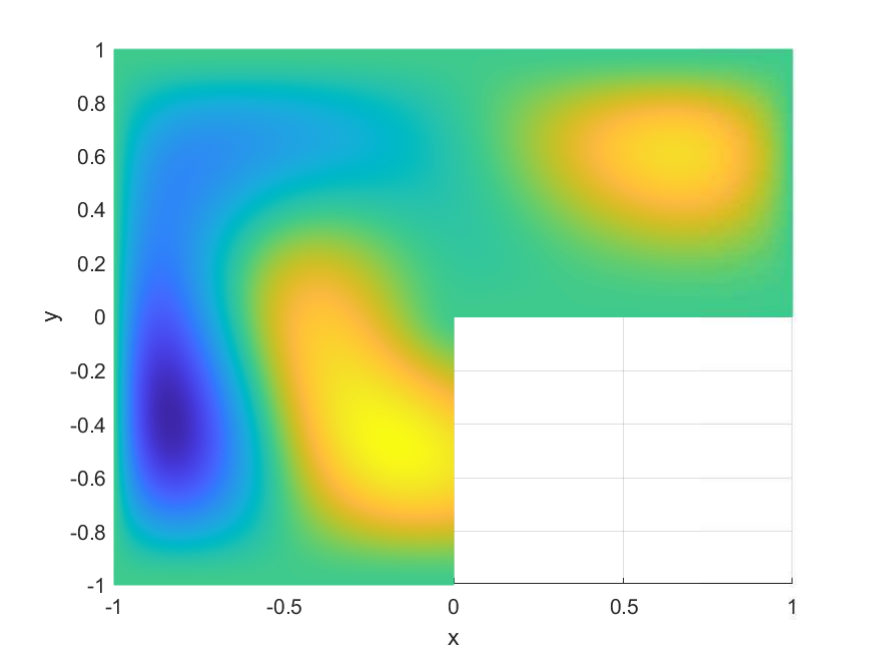}
	\end{subfigure}
	 \begin{subfigure}[b]{0.3\textwidth}
		 \includegraphics[width=5cm,height=5cm]{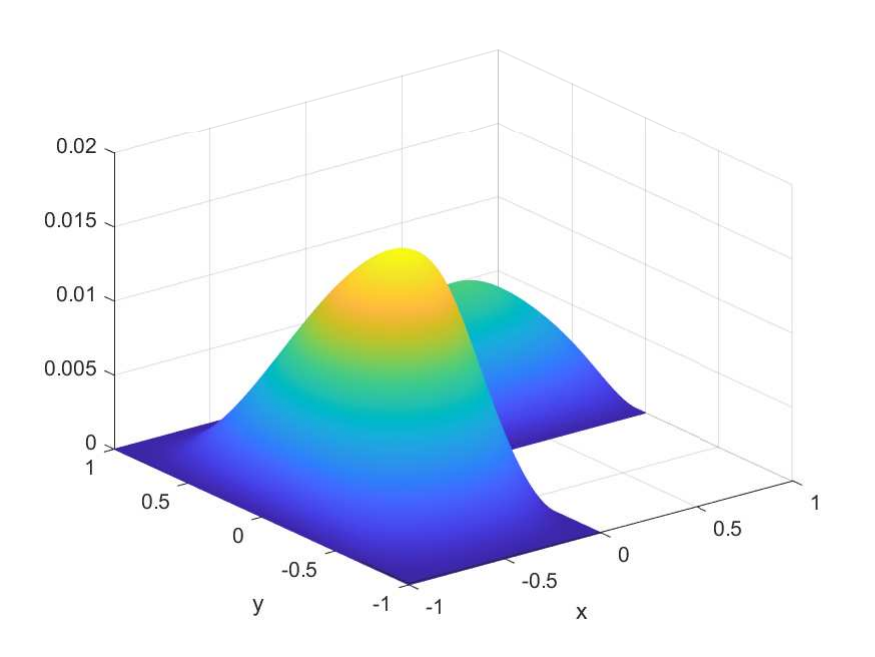}
	\end{subfigure}
	\caption{\cref{example3}:  $u^{(2)}$ (left and middle),  and $|u_h^{(2)}-u^{(2)}|$ (right) at all grid points in $\overline{\Omega}$ with $h=\tfrac{1}{2^9}$ and any $\nu>0$.}
	\label{fig:u:2:example:3}
\end{figure}
\section{Conclusion}\label{sec:Conclu}
In this paper, we developed a finite difference method for the Stokes problem with Dirichlet boundary conditions on an axis-aligned domain. The approach is based on a novel reformulation that leads to a system of fourth-order partial differential equations for the velocity, accompanied by third-order boundary conditions. A key feature of this formulation is that it is independent of both the pressure $p$ and the kinematic viscosity $\nu$, and it does not require the domain $\Omega$ to be simply connected.

For smooth velocity fields $\bu$, we constructed the sixth-order finite difference operator that preserves the decoupling structure at all grid points. Numerical experiments indicate that the scheme is stable
and converges with sixth-order accuracy for smooth solutions. In cases involving singular velocity fields, the method also converges, but at a reduced rate.

Future work will focus on extending the proposed method to more general geometries, including curvilinear and unstructured domains. Additional directions include the adaptation of the method to time-dependent Stokes and Navier-Stokes equations, as well as exploring its extension to three-dimensional problems.

\section{Declarations}
\noindent \textbf{Conflict of interest:} The authors declare that they have no conflict of interest.\\
\noindent \textbf{Data availability:} Data will be made available on reasonable request.

\end{document}